\newtheoremstyle{dotless}{}{}{\itshape}{}{\bfseries}{}{ }{} 
\theoremstyle{dotless}
\newtheorem{theorem}{Theorem}[section]
\newtheorem{thm}{Theorem}
\newtheorem{lemma}{Lemma}
\newtheorem{prop}{Proposition}
\newtheorem{cor}{Corollary}
\newcommand{\Mod}[1]{\ (\mathrm{mod}\ #1)}
\begin{document} 

\title{The Frobenius postage stamp problem, and beyond}

\author{Andrew Granville and George Shakan}
\address{AG: D\'epartement de math\'ematiques et de statistique, Universit\'e de Montr\'eal, CP 6128 succ. Centre-Ville, Montr\'al, QC H3C 3J7, Canada; and Department of Mathematics, University College London, Gower Street, London WC1E 6BT, England. }
\email{andrew@dms.umontreal.ca}
\address{GS: Mathematical Institute, University of Oxford, Andrew Wiles Building, Radcliffe Observatory Quarter, Woodstock Road, Oxford, OX2 6GG, UK.}
\email{george.shakan@gmail.com}

\thanks{A.G.~was funded by the European Research Council grant agreement n$^{\text{o}}$ 670239, and by the Natural Sciences and Engineering Research Council of Canada (NSERC) under the Canada Research Chairs program.  G.S.~was supported by Ben Green's Simons Investigator Grant 376201. Many thanks to Seva Lev and Tyrrell McAllister for pointing us to the references \cite{Sav} and  \cite{ST}, respectively.}

\dedicatory{Dedicated to Endre Szemer\'edi on the occasion of his 80th birthday}


\date{}

\begin{abstract}
Let $A$ be a finite subset of $\mathbb{Z}^n$, which generates $\mathbb{Z}^n$ additively. We provide a precise description of the $N$-fold sumsets $NA$ for $N$ sufficiently large, with some explicit bounds on ``sufficiently large."
\end{abstract}
\maketitle

\section{Introduction}

Let $A$ be a given finite subset of the integers.  For any integer $N \geq 1$, we are interested in determining the $N$-fold sumset of $A$,
\[
NA : = \{a_1 + \cdots + a_N : a_1 , \ldots , a_N \in A\} ,
\]
where the $a_i$'s are not necessarily distinct.
For simplicity we may assume without loss of generality that the smallest element of $A$ is $0$, and that the gcd of its elements is $1$.\footnote{Since if we translate $A$ then we translate $NA$ predictably, as $N(A+\tau)=NA+N\tau$, and 
since if $A=g\cdot B:=\{ gb: b\in B\}$ then $NA=g\cdot NB$.} Under these assumptions we know  that 
\[
0\in A\subset 2A\subset 3A\subset \cdots \subset \mathbb N,
\]
where $\mathbb N$ is the natural numbers, defined to be the integers $\geq 0$.
Moreover there exist integers $m_1,\ldots,m_k$ such that $m_1 a_1 + \cdots + m_k a_k = 1$, and therefore
\[
\mathcal P(A)   = \left\{ \sum_{a\in A} n_aa:\ \text{ Each } n_a\in \mathbb{N}\right\}
=\lim_{N\to \infty} NA = \mathbb N \setminus \mathcal E(A)
\]
for some finite \emph{exceptional set} $\mathcal E(A)$.\footnote{We give a simple proof that $\mathcal E(A)$ is finite in 
section \ref{sec: basics}.}

One very special case is the \emph{Frobenius postage stamp problem} in which we wish to determine what exact postage cost one can make up from an unlimited of $a$ cent and $b$ cent stamps. In other words, we wish to determine
$\mathcal P(A)$ for $A=\{ 0,a,b\}$. It is a fun challenge for a primary school student to show that 
$\# \mathcal E(\{ 0,3,5\})=\{ 1,2,4,7\}$, and more generally, \cite{Syl}, that  
\[
\max \mathcal E(\{ 0,a,b\}) = ab - a - b, \text{ and }  |\mathcal E(\{ 0,a,b\})| = \tfrac12 (a-1)(b-1).
\]
Erd\H{o}s and Graham \cite{EG} conjectured precise bounds for  $\max \mathcal E(A)$; see also Dixmier \cite{Di}.

In this article we study the variant in which we only allow the use of at most $N$ stamps; that is, can we determine the structure of the set $NA$?   If $b = \max A$, then $NA\subset \{0 , \ldots , bN\} \cap \mathcal P(A)= \{0 , \ldots , bN\} \setminus \mathcal E(A)$.  Moreover, we can  use symmetry to determine a complementary exceptional set:\   Define the set $b-A:=\{ b-a:\ a\in A\}$. Then $NA=Nb-N(b-A)$ and so $NA$ cannot contain any elements
$Nb-e$ where $e\in \mathcal E(b-A)$. Therefore
\[
NA \subset \{0 , \ldots , bN\} \setminus (\mathcal E(A) \cup (bN - \mathcal E(b-A))).
\]
We ask when equality holds?

\begin{thm} \label{Thm1} 
Let $A$ be a given finite subset of the integers, with smallest element $0$ and largest element $b$, in which the gcd of the elements of $A$ is $1$.
If  $N\geq 2[\tfrac b2]$ and  $0 \leq n\leq Nb$ with $n\not\in \mathcal E(A)\cup (Nb-  \mathcal E(b-A))$ then $n\in NA$. Equivalently, we have 
$$NA = \{0 , \ldots , bN\} \setminus (\mathcal E(A) \cup (bN - \mathcal E(b-A))).$$
\end{thm}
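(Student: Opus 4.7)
The inclusion $NA\subseteq\{0,\dots,bN\}\setminus(\mathcal{E}(A)\cup(bN-\mathcal{E}(b-A)))$ is already established in the discussion preceding the theorem. For the reverse, my plan is to stratify by residue class modulo $b$. For each $r\in\{0,1,\dots,b-1\}$ let $s_r$ be the smallest element of $\mathcal{P}(A)$ with $s_r\equiv r\pmod b$ and let $\nu_r$ be the minimum number of summands from $A$ representing $s_r$, so $s_0=\nu_0=0$; for $r\ne0$ a minimum representation of $s_r$ in $A$ uses no copy of $b$, otherwise $s_r-b$ would be a smaller element of $\mathcal{P}(A)$ in residue class $r$. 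Define $s'_r,\nu'_r$ analogously for $b-A$. The theorem will reduce to two structural claims. \textbf{Lemma~1:} for every $n\in\mathcal{P}(A)$ with $n\equiv r\pmod b$ the minimum number of summands from $A$ representing $n$ equals $\nu_r+(n-s_r)/b$. \textbf{Lemma~2:} $\nu_r\cdot b=s_r+s'_{b-r}$ for every $r$. Granted these, the theorem follows in one line: for $n$ in the right-hand side with $r=n\bmod b$, the hypothesis $Nb-n\in\mathcal{P}(b-A)$ gives $Nb-n\ge s'_{b-r}$, so by Lemma~2, $n\le(N-\nu_r)b+s_r$, and then by Lemma~1 the minimum number of summands representing $n$ is at most $\nu_r+(N-\nu_r)=N$; padding with copies of $0\in A$ yields $n\in NA$.

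For Lemma~1 the upper bound is the obvious construction (take a minimum representation of $s_r$ and append $(n-s_r)/b$ copies of $b$). The lower bound I would prove by strong induction on $j:=(n-s_r)/b$. In a minimum representation $n=\sum a_i$ with $\mu$ summands, let $k$ be the number of copies of $b$ used; stripping them leaves a representation of $n-kb\equiv r\pmod b$ by $\mu-k$ elements of $A\setminus\{b\}$. The minimality of $s_r$ forces $n-kb\ge s_r$, hence $k\le j$: if $k=j$ the residual represents $s_r$ itself so $\mu-k\ge\nu_r$, and if $0<k<j$ the inductive hypothesis applied to $s_r+(j-k)b$ gives $\mu-k\ge\nu_r+(j-k)$. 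Either way $\mu\ge\nu_r+j$.

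The delicate subcase is $k=0$, where the induction does not descend to a smaller value of $n$. Here I use the standard exchange principle for minimum representations: no proper subset of the summands can sum to a positive multiple of $b$, for otherwise that subset could be replaced by strictly fewer copies of $b$, contradicting minimality. This principle forces the $\mu+1$ partial sums $0,S_1,\dots,S_\mu=n$ to have pairwise distinct residues modulo $b$, whence $\mu\le b-1$. Combined with $\mu(b-1)\ge n$ and the hypothesis $N\ge 2\lfloor b/2\rfloor$, a further case analysis exploiting the very constrained structure of such representations (whose summands all lie in $A\cap[1,b-1]$) rules out this configuration when $j\ge1$, closing the induction.

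Lemma~2 is then a consequence of Lemma~1. The inequality $\nu_r b\ge s_r+s'_{b-r}$ is immediate by reflection: the minimum representation $s_r=\sum n_a a$ with $\nu_r$ summands gives $\nu_r b-s_r=\sum n_a(b-a)$, an element of $\mathcal{P}(b-A)$ in residue $b-r$ and therefore at least $s'_{b-r}$. The reverse inequality $\nu_r b\le s_r+s'_{b-r}$ comes from applying the same reflection to a minimum representation of $s'_{b-r}$ in $b-A$, producing an element of $\mathcal{P}(A)$ in residue $r$ represented by $\nu'_{b-r}$ summands; Lemma~1 then forces that element to be exactly $s_r$, yielding the bound. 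The main obstacle throughout is the $k=0$ subcase of Lemma~1, where the natural induction fails and where the hypothesis $N\ge 2\lfloor b/2\rfloor$ enters.
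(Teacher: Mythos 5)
Your two structural lemmas are both false, and the paper's own running example $A=\{0,1,b-1,b\}$ (for which $b-A=A$ and $\mathcal E(A)=\emptyset$) refutes them. Take $r=b-2$ with $b\geq 5$: the smallest element of $\mathcal P(A)$ congruent to $b-2\pmod b$ is $s_{b-2}=b-2$, and since the only nonzero element of $A$ below $b-1$ is $1$, every representation of $b-2$ uses $b-2$ ones, so $\nu_{b-2}=b-2$. Lemma~2 would then assert $(b-2)\,b=s_{b-2}+s'_{2}=(b-2)+2=b$, which is absurd. Lemma~1 fails at $n=2b-2$ (here $j=1$): it predicts a minimum of $\nu_{b-2}+1=b-1$ summands, but $2b-2=(b-1)+(b-1)$ uses only two. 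This is exactly the $k=0$ configuration you hoped a ``further case analysis'' would rule out; it cannot be ruled out, because a minimal representation of $n$ in residue class $r$ need not pass through the minimal representation of $s_r$ at all. Your proof of the reverse inequality in Lemma~2 leans on Lemma~1, so it collapses with it.

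The failure is not reparable within your framework, because your one-line deduction from the two lemmas never uses the hypothesis $N\geq 2[\tfrac b2]$ (Lemma~1 is a statement about minimal representations with no $N$ in it), so it would establish the conclusion for every $N\geq 1$. That stronger statement is false: for $A=\{0,1,b-1,b\}$ and $N=b-3$, the element $n=b-2$ lies in $[0,Nb]$ and avoids $\mathcal E(A)\cup(Nb-\mathcal E(b-A))=\emptyset$, yet $b-2\notin(b-3)A$. What survives of Lemma~2 is only the inequality $\nu_r b\geq s_r+s'_{b-r}$ (the paper's $N_{a,A}\geq N^*_{a,A}$, proved by the same reflection you use); equality characterizes precisely those $A$ and $r$ for which the conclusion holds for all $N\geq 1$ (Corollary~\ref{cor2tolem1}), and it genuinely fails once $\#A\geq 4$. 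The actual argument must confront the residue classes with $\nu_r>\tfrac b2$ directly: the paper applies the Savchev--Chen structure theorem for long minimal zero-sum sequences to show that then $s_r$ is $\nu_r$ copies of a single $h$ with $(h,b)=1$ and every other $(\ell h)_b\in A$ has $\ell\geq\nu_r+1$, and then uses Kneser's theorem together with a pigeonhole argument to bridge the gap between the two progressions $s_r+jb\in(\nu_r+j)A$ and $Nb-(s'_{b-r}+ib)$; that is where the hypothesis $N\geq 2[\tfrac b2]$ does real work.
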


In the next section we will show that if $A$ has just three elements then Theorem \ref{Thm1} holds for all integers $N\geq 1$ (which does not seem to have been observed before). However this is not true for larger $A$:  If  $A=\{ 0,1,b-1,b\}$ then $\mathcal{E}(A) = \mathcal{E}(b-A)=\emptyset$ and  $b-2\in (b-2)A$ but $b-2\notin (b-3)A$, in which case Theorem \ref{Thm1} can only hold for $N\geq b-2$. We conjecture that one should be able to obtain the lower bound ``$N\geq b-2$'' (which would then be best possible) in place of ``$N\geq 2[\tfrac b2]$'' in Theorem \ref{Thm1}.\footnote{Bearing in mind the example $A=\{ 0,1,N+1,N+2,\dots,b\}$, we can refine this conjecture to ``$N\geq b+2-\#A$'' whenever $\#A\geq 4$.} It is feasible that one could develop our methods to show this, but it seems to us like that would be a formidable task. 

Theorem \ref{Thm1} seems to have first been proved, but with the bound $N\geq b^2(\# A-1)$,  by Nathanson \cite{Nath} in 1972, which was improved to  $N\geq \sum_{a\in A,\ a\ne 0} (a-1)$ in \cite{WCC}.\footnote{\cite{WCC} claim that their result is ``best possible," but this is a consequence of how they formulate their result. Indeed Theorem \ref{Thm1}  yields at least as good a bound \emph{for all}  sets $A$ with $\# A\geq 4$, and is better in all but a couple of families of examples.}

We will generalize Theorem \ref{Thm1} to sets $A$ of arbitrary dimensions. Here we assume that $0\in A \subset \mathbb{Z}^n$. 
The \emph{convex hull} of the points in $A$ is given  by 
 $$
 H(A) = \left\{\sum_{a \in A} c_a a : \sum_{a\in A} c_a = 1, \text{ each } c_a\geq 0\right\},
 $$
so that 
\[
C_A := \left\{\sum_{a \in A} c_a a :  \text{Each } c_a \geq 0\right\} = \lim_{N\to \infty} N H(A),
\]
is the \emph{cone} generated by $A$. Let $\mathcal P (A) $ be the set of sums in $C_A$ where each $c_a\in \mathbb N$, so that $\mathcal P (A)\subset C_A \cap\mathbb{Z}^n$. We define the \emph{exceptional set} to be
 \[
 \mathcal E (A) : =  
 (C_A \cap \mathbb{Z}^n) \setminus \mathcal P (A),
\]
 the   integer points that are in the convex hull of positive linear combinations of points from $A$, and yet are not an element of   $NA$, for any integer $N\geq 1$.
With this notation we can formulate our result:

\begin{thm} \label{Thm2} 
Let $0\in A \subset \mathbb{Z}^n$ such that $A$ spans $\mathbb Z^n$ as a vector space over $\mathbb Z$.  There exists a constant $N_{A}$ such that if $N\geq N_{A}$,
\[
NA = (NH(A)\cap \mathbb Z^n) \setminus  \mathcal E_N(A) \text{ where } \mathcal E_N(A):= \bigg( \mathcal E(A)\cup \bigcup_{a\in A} (aN-\mathcal E(a-A) ) \bigg) .
\]
\end{thm}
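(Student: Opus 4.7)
The strategy is to prove the two inclusions separately. The inclusion $NA \subseteq (NH(A) \cap \mathbb{Z}^n) \setminus \mathcal{E}_N(A)$ is straightforward and mirrors the argument preceding Theorem~\ref{Thm1}: for $x = a_1 + \cdots + a_N \in NA$, convexity gives $x \in NH(A) \cap \mathbb{Z}^n$; the very existence of a representation gives $x \notin \mathcal{E}(A)$; and for each $a \in A$ the identity $Na - x = \sum_i(a - a_i) \in N(a - A) \subseteq \mathcal{P}(a - A)$ shows $x \notin Na - \mathcal{E}(a - A)$.

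\textbf{Reverse inclusion.} The key reduction is that, because $0 \in A$, we have $MA \subseteq (M+1)A$ for every $M$, so it suffices to exhibit \emph{some} representation $x = \sum_{a \in A} m_a a$ with $m_a \in \mathbb{N}$ and $\sum_a m_a \leq N$; one then pads with copies of $0$ to reach total $N$. Now fix a vertex $v$ of $H(A)$. Since $H(A) \subseteq v + \mathrm{cone}(A - v)$, one has $Nv - x \in C_{v - A}$, and the hypothesis $x \notin Nv - \mathcal{E}(v - A)$ promotes this to $Nv - x \in \mathcal{P}(v - A)$. So there exist $k_a \in \mathbb{N}$ with $Nv - x = \sum_a k_a(v - a)$, which rearranges to
\[
x \;=\; \sum_{a \in A} k_a\, a \;+\; \Bigl(N - \sum_{a \neq v} k_a\Bigr) v,
\]
displaying $x \in NA$ \emph{provided} $\sum_{a \neq v} k_a \leq N$.

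\textbf{Main obstacle.} The heart of the proof is arranging a representation of $Nv - x$ in $\mathcal{P}(v - A)$ of total weight at most $N$. An LP relaxation always supplies a \emph{real} representation of total exactly $N$: writing $x/N = \sum c_a a$ with $c_a \geq 0$ and $\sum c_a = 1$ gives $Nv - x = \sum (Nc_a)(v - a)$. My plan is to round the $Nc_a$ to nearby non-negative integers, producing a lattice error $e \in \mathbb{Z}^n$ of bounded norm; since $Nv - x \notin \mathcal{E}(v - A)$, the finiteness of $\mathcal{E}(v - A)$ lets one absorb $e$ using $O_A(1)$ extra $(v - a)$ terms, so the total weight exceeds the LP optimum by only a bounded constant.

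\textbf{Hardest case.} The genuine difficulty is that for a given $x$ no single vertex may have enough slack: if $x/N$ lies near a lower-dimensional face of $H(A)$, the LP optimum at every vertex is close to $N$, leaving no room for the rounding correction. To handle this uniformly I would fix a triangulation of $H(A)$ into simplices with vertices in $A$, locate $x/N$ inside some simplex $\Delta = \mathrm{conv}(v_0,\ldots,v_n)$, and rerun the argument within $\Delta$: barycentric coordinates give a unique real representation using only $v_0,\ldots,v_n$, and the rounding becomes a controlled lattice-point problem inside the fundamental parallelepiped of $N\Delta$, with $\mathcal{E}(v_i - A)$ governing the correction at each corner $v_i$. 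Taking $N_A$ large enough to handle every (finitely many) triangulation simplex simultaneously completes the proof.
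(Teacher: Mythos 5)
Your forward inclusion and your overall architecture (triangulate $H(A)$ via Carath\'eodory, then argue from whichever vertex of the containing simplex has slack in the barycentric coordinates) match the paper's proof, so the only question is the step you yourself flag as the ``main obstacle,'' and there the argument has a genuine gap. You justify absorbing the rounding error $e$ by ``the finiteness of $\mathcal{E}(v-A)$,'' but for $n\geq 2$ the exceptional set $\mathcal{E}(v-A)$ is in general \emph{infinite}: as the example \eqref{ex1} and Theorem \ref{Thm3} show, it is a finite union of translates of lower-dimensional sub-cones, and its infinitude is precisely the new difficulty of the higher-dimensional problem. So that justification cannot stand as written.

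More importantly, even with better control on $\mathcal{E}$, the logic of the absorption step is off. Rounding the LP coefficients produces an error $e=\sum_a \{Nc_a\}(v-a)$, a lattice point of the cone of bounded norm; but the hypothesis $Nv-x\notin\mathcal{E}(v-A)$ tells you only that the \emph{target} $Nv-x$ is representable. It says nothing about whether $e$ itself is representable, and $e$ can perfectly well lie in $\mathcal{E}(v-A)$, or be representable only with weight far exceeding the available slack $N-\sum_a\lfloor Nc_a\rfloor\leq n$. What is actually needed is a uniform decomposition: a single finite set $A^+\subset\mathcal{P}(A)$ such that every $u\in\mathcal{P}(A)$ can be written as $u=w+p$ with $w\in A^+$ and $p\in\mathcal{P}(B)$ for the simplicial subset $B$ at hand; then $w$ costs a bounded number of summands, and the weight of $p$ is read off from the unique real $B$-coordinates of $u$, which are dominated by the LP coordinates. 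The existence of such a finite set of ``minimal heads'' is exactly what the paper extracts from Mann's lemma (Lemma \ref{lem: Mann}) applied to the exponent vectors in each coset of the sublattice $\Lambda_B$ (Proposition \ref{Prop3}); it is the one nontrivial input your sketch is missing, and nothing in your outline substitutes for it.
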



We have been unable to find exactly this result in the literature. 
It would be good to obtain an upper bound on $N_A$, presumably in terms of the geometry of the convex hull of $A$.

In Theorem \ref{Thm1}, when $A\subset \mathbb N^1$,  the sets $\mathcal E(A)$ are finite, which can be viewed as a finite union of $0$ dimensional objects. In the two dimensional example
\begin{equation}\label{ex1} A = \{(0,0) , (2,0) , (0,3) , (1,1)\},\end{equation}
we find that $\mathcal{E} (A)$ in infinite, explicitly
\begin{align*}
\mathcal{E} (A)&=\quad  \{ (0,1), (1,0), (1,2)  \} + \mathcal P(\{ (0,0),(2,0)\})  \\
&  \cup \quad \{ (0,1), (0,2), (1,0), (1,2), (2,1), (3,0)\} + \mathcal P(\{ (0,0),(0,3)\}),
\end{align*}
the  union of nine one-dimensional objects. More generally we prove the following:

\begin{thm} \label{Thm3} 
Let $0\in A \subset \mathbb{Z}^n$ such that $A$ spans $\mathbb Z^n$ as a vector space over $\mathbb Z$.  Then 
$\mathcal E(A)$ is a finite union of sets of the form
\[
\bigg\{  v+\sum_{b\in B} m_b b: m_b\in \mathbb Z_{\geq 0}\bigg\} = v+\mathcal P(B\cup\{ 0\})
\]
where $v\in C_A\cap \mathbb Z_{\geq 0}^n$, with $B\subset A$ contains $\leq n-1$ elements, and the vectors in $B-0$ are linearly independent.
\end{thm}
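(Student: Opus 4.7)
The plan is to reduce the problem, via a triangulation of $C_A$, to a standard decomposition inside a single simplicial cone. Since $A$ spans $\mathbb{Z}^n$, the cone $C_A$ is $n$-dimensional, and a standard argument (e.g.\ a pulling simplicial subdivision of $H(A)$ with vertices in $A$, taking the cones from $0$ over the simplices that do not contain $0$) expresses $C_A = \bigcup_{i=1}^k \sigma_i$ with each $\sigma_i$ an $n$-dimensional simplicial cone generated by a linearly independent set $S_i \subset A\setminus\{0\}$ of size $n$. It suffices to describe $\mathcal{E}(A) \cap \sigma_i$ for each $i$.

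Fix $i$ and write $S_i = \{s_1,\dots,s_n\}$. Let $\Pi_i = \{\sum_j t_j s_j : 0 \leq t_j < 1\}$ be the half-open fundamental parallelepiped of $S_i$; since $S_i$ is a $\mathbb{Q}$-basis of $\mathbb{R}^n$, each $x \in \sigma_i \cap \mathbb{Z}^n$ has a unique decomposition $x = y + m$ with $y \in \Pi_i \cap \mathbb{Z}^n$ (a finite set) and $m \in \mathcal{P}(S_i)$. For each such $y$ set $U_y = \{m \in \mathcal{P}(S_i) : y + m \in \mathcal{P}(A)\}$; since $\mathcal{P}(S_i) \subseteq \mathcal{P}(A)$ is closed under addition with $\mathcal{P}(A)$, $U_y$ is upward closed in $\mathcal{P}(S_i) \cong \mathbb{Z}_{\geq 0}^n$. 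I claim $U_y \neq \emptyset$: in the finite quotient group $G = \mathbb{Z}^n/\mathbb{Z}\langle S_i\rangle$, the spanning hypothesis makes $\{\bar a\}_{a\in A}$ generate $G$, and in a finite abelian group every element is a $\mathbb{Z}_{\geq 0}$-combination of any generating set (using $|G|\cdot \bar a = 0$ to flip negative signs); lifting such an expression for $\bar y$ produces integers $n_a \geq 0$ and $l_j \in \mathbb{Z}$ with $y = \sum_{a\in A} n_a a + \sum_j l_j s_j$, and then $m_0 := \sum_j \max(-l_j, 0)\,s_j \in \mathcal{P}(S_i)$ satisfies $y + m_0 \in \mathcal{P}(A)$, i.e., $m_0 \in U_y$.

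By Dickson's lemma the minimal elements of $U_y$ form a finite, nonempty set $\{m_1,\dots,m_r\}$, and $U_y = \bigcup_{j=1}^r (m_j + \mathcal{P}(S_i))$. Writing $(x)_s$ for the $s$-coordinate of $x$ in the basis $S_i$,
\[
\mathcal{P}(S_i) \setminus U_y \;=\; \bigcap_{j=1}^r \bigcup_{s \in S_i} \{x \in \mathcal{P}(S_i) : (x)_s < (m_j)_s\}.
\]
Distributing intersection over union rewrites this as a finite union, over functions $\phi : \{1,\dots,r\} \to S_i$, of ``boxes'' $B_\phi := \bigcap_j \{x : (x)_{\phi(j)} < (m_j)_{\phi(j)}\}$. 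Each $B_\phi$ restricts the coordinates indexed by $J := \mathrm{image}(\phi)$ to a finite range and leaves the rest free, so it splits as a finite union of translates $v' + \mathcal{P}(S_i \setminus J)$ with $|S_i\setminus J| \leq n - 1$ (since $r \geq 1$ forces $J \neq \emptyset$), and with $S_i\setminus J$ linearly independent as a subset of the basis $S_i$. Shifting back by $y$ and taking the union over $y \in \Pi_i \cap \mathbb{Z}^n$ and over $i$ exhibits $\mathcal{E}(A)$ as a finite union of sets $v + \mathcal{P}(B \cup \{0\})$ with $B \subseteq A$ linearly independent and $|B| \leq n - 1$, as required.

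The delicate point is the nonemptiness of $U_y$: without it the construction could produce a piece of the form $v + \mathcal{P}(S_i)$ (i.e., with $|B| = n$), violating the required bound. This is precisely where the hypothesis that $A$ spans all of $\mathbb{Z}^n$ (as opposed to only a sublattice) is used; everything else is a routine application of Dickson's lemma and the distributive law.
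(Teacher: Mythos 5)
Your proof is correct and follows essentially the same route as the paper: cover $C_A$ by simplicial cones on linearly independent generators from $A$, split each cone into cosets of the sublattice those generators span, observe that the set of representable points in each coset is a non-empty upward-closed subset of $\mathbb{Z}_{\geq 0}^n$, and apply Dickson's/Mann's lemma to describe the complement as a finite union of partially-bounded boxes. The only substantive difference is your non-emptiness argument for $U_y$ (positive generation of the finite quotient $\mathbb{Z}^n/\langle S_i\rangle_{\mathbb{Z}}$), where the paper instead exhibits an explicit deep translate of the fundamental domain inside $\mathcal{P}(A)$ (its Proposition \ref{Prop5}); both are valid.
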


We deduce from Theorem \ref{Thm3}  that
  \begin{equation}\label{E} 
  \# \mathcal{E}_N (A)  = O(N^{n-1}).
  \end{equation}
Theorem \ref{Thm3} also implies that there is a bound $B_A$ such that 
every element of $C_A \cap \mathbb{Z}^n$ which is further than a distance $B_A$ from its boundary, is an element of 
$\mathcal{P} (A)$ (and so not in $\mathcal{E} (A)$).  

The most remarkable result in this area is the 1992 theorem of  Khovanskii \cite[Corollary 1]{Khov} who proved that $\# NA$ is a polynomial of degree $n$ in $N$ for $N$ sufficiently large, where the leading coefficient is $ {\rm Vol}(H(A))$.
His extraordinary proof proceeds by constructing a finitely-generated graded module $M_1,M_2,\ldots$ over  $\mathbb C[t_1,\dots,t_k]$ with  $k=\#A$, where each $M_N$ is a vector space over $\mathbb C$ of dimension $|NA|$. One then deduces that $|NA|=\text{dim}_{\mathbb C} M_N$ is a polynomial in $N$, for $N$ sufficiently large, by a theorem of Hilbert. 
Nathanson \cite{Nath2} showed that this can generalized to sums $N_1A_1+\cdots+N_kA_K$ when all the $N_i$ are sufficiently large. This was all reproved by Nathanson and Ruzsa \cite{NathRuz} using elementary, combinatorial ideas (using several ideas in common with us).  Moreover it can also be deduced from Theorems  \ref{Thm2} and  \ref{Thm3} .

In section~\ref{easy} we look at the case where $A$ has three elements, showing that  the result holds for all $N\geq 1$. This easier case introduces some of the ideas we will need later.  In section~\ref{notsoeasy} we prove Theorem \ref{Thm1}. Obtaining the bound $N\geq 2b-2$ is not especially difficult, but improving this to $N\geq 2[\tfrac b2]$ becomes complicated and so we build up to it in a number of steps. In section~\ref{high} we begin the study of a natural higher dimensional analog. The introduction of even one new dimension creates significant complications, as the exceptional set $\mathcal{E} (A)$ is no longer necessarily finite.
In the next subsection we indicate how one begins to attack these questions.

\subsection{Representing most elements of $\mathbb Z_{\geq 0}^n$} \label{sec: basics}
If  $A=\{ 0,3,5\}$ one can represent
\[
8=1\times 3+1\times 5, \quad 9 =3\times 3\ \text{ and }\ 10=2\times 5
\]
and then every integer $n\geq 11$ is represented by adding a positive multiple of 3 to one of these representations, depending on whether
$n\equiv 2,0$ or $1 \mod 3$, respectively. In effect we are find representatives $r_1=10, r_2=8, r_3=9$ of $\mathbb Z/3\mathbb Z$ that belong to $\mathcal P(A)$, and then $\mathbb Z_{\geq 8} =\{ r_1,r_2,r_3\}+3\mathbb Z_{\geq 0} \subset \mathcal P(A)$, which implies that 
$\mathcal E(A)\subset \{ 0,\dots,7\}$. 

We can generalize this to arbitrary finite $A\subset \mathbb Z_{\geq 0}$ with $\text{gcd}(a:\ a\in A)=1$, as follows: Let $b\geq 1$ be the largest element of $A$ (with $0$ the smallest).  Since $\text{gcd}(a:\ a\in A)=1$ there exist integers $m_a$, some positive, some negative, for which $\sum_{a\in A} m_aa=1$. Let $m:=\max_{a\in A} (-m_a)$ and $N:=bm\sum_{a\in A} a$, so that 
\[
r_k:=N+k= \sum_{a\in A} (bm+km_a)a\in \mathcal P(A) \text{ for } 1\leq k\leq b
\]
 (as each $bm+km_a\geq bm-km\geq 0$) and 
$r_k\equiv k \mod b$. But then 
\[
\mathbb Z_{> N}=N+ b\mathbb Z_{\geq 1}  =N+\{ 1,\dots, b\} +b\mathbb Z_{\geq 0}= \{ r_1,\dots, r_{b}\}+b\mathbb Z_{\geq 0} \subset \mathcal P(A),
\]
 which implies that  $\mathcal E(A)\subset \{ 0,\dots,N\}$. 

We can proceed similarly in $\mathbb Z_{\geq 0}^n$ with $n>1$, most easily when $C_A$ is generated by a set $B$ containing exactly $n$ non-zero elements (for example,  $B:=\{(0,0), (2,0), (0,3)\}\subset A$,  in the example from \eqref{ex1}).
Let $\Lambda_B$ be the lattice of   integer linear combinations of elements of $B$. We need to find $R\subset \mathcal P(A)$, a set of representatives of $\mathbb Z^n/\Lambda_B$, and then $(R+C_B)\cap \mathbb Z^n\subset \mathcal P(A)$.  In the example \eqref{ex1} we can easily represent $\{ (m,n)\in \mathbb Z^2: 4\leq m\le 5,\ 3\leq n\leq 5\}$. Therefore if
$(r,s)\in \mathcal E(A)$ then either $0\leq r\leq 3$ or $0\leq s\leq 2$, and so we see that $\mathcal E(A)$ is a subset of a finite set of translates of one-dimensional objects.

\section{Classical postage stamp problem with at most $N$ stamps}\label{easy}
It is worth pointing out explicitly that if,  for given coprime integers $0 < a < b$, we have $n\in N\{0,a,b\}$ so that
$n = ax + by$ with $x + y \leq N$ then\footnote{In this displayed equation, and throughout, we write ``$r \times a$'' to mean $r$ copies of the integer $a$.}
\[ (N - x - y)\times b + x \times (b-a)   = bN - n\]
so that  $bN - n \in N\{ 0,b-a,b\}$.  
 
\begin{thm}[Postage Stamp with at most $N$ stamps] \label{Thm0}
Let $0 < a < b$ be coprime integers and $A = \{0,a,b\}$. If $N\geq 1$ then 
\[ NA = \{0 , \ldots , bN \} \setminus (\mathcal E(A) \cup (bN - \mathcal E(b-A))).\]
\end{thm}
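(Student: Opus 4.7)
The plan is to exploit the symmetry $n \leftrightarrow bN - n$, which exchanges $A$ with $b - A$, together with the fact that for $\gcd(a, b) = 1$ any representation $n = ax + by$ with $x, y \geq 0$ has its $a$-coefficient $x$ uniquely determined modulo $b$ (because $ax \equiv n \pmod{b}$). The forward inclusion $NA \subset \{0, \ldots, bN\} \setminus (\mathcal{E}(A) \cup (bN - \mathcal{E}(b - A)))$ is immediate from the identity $(N - x - y) b + x (b - a) = bN - n$ highlighted just before the theorem, which shows $bN - NA = N(b - A) \subset \mathcal{P}(b - A)$, so $NA$ avoids both $\mathcal{E}(A)$ and its mirror $bN - \mathcal{E}(b - A)$.

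For the reverse inclusion, I would suppose $n \in \{0, \ldots, bN\}$ lies in neither $\mathcal{E}(A)$ nor $bN - \mathcal{E}(b - A)$ and produce an explicit short representation. Since $n \in \mathcal{P}(A)$, iteratively replacing $(x, y)$ by $(x - b, y + a)$ in any witness $n = ax + by$ yields a canonical representation with $0 \leq x < b$ and $y \geq 0$. Applying the same reduction to $bN - n \in \mathcal{P}(b - A)$ produces a canonical representation $bN - n = (b - a) u + b v$ with $0 \leq u < b$ and $v \geq 0$.

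The key step, and the only point with any real content, is to identify these two canonical representations. Rearranging the second equation gives $n = a u + b (N - u - v)$; comparing $a$-coefficients modulo $b$ forces $u \equiv x \pmod{b}$, and the bounds $0 \leq x, u < b$ then force $u = x$. Substituting back gives $b(N - x - v) = by$, so $v = N - x - y$, and the inequality $v \geq 0$ translates to $x + y \leq N$. Padding with $N - x - y$ copies of $0$ then exhibits $n$ as a sum of $N$ elements of $A$, so $n \in NA$. Notably, the argument performs no case analysis and requires no hypothesis of the form $N \geq N_0$, which is precisely why the conclusion holds for every $N \geq 1$, in contrast to the phenomenon behind Theorem~\ref{Thm1} for $\#A \geq 4$.
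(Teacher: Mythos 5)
Your proof is correct and is essentially the paper's own argument: both reduce the two representations of $n$ and $bN-n$ to canonical form with the $a$-coefficient (resp.\ $(b-a)$-coefficient) in $[0,b)$, identify those coefficients via the congruence modulo $b$, and conclude $x+y\le N$. The only cosmetic difference is that the paper adds the two equations while you substitute one into the other.
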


In other words, $NA$ contains all the integers in  $[0,bN]$, except  a few unavoidable exceptions near to the endpoints of the interval. 

\begin{proof}
Suppose that $n \in \{0 , \ldots , bN\}$, $n \notin \mathcal E(A)$ and $bN-n \notin   \mathcal E(b-A)$, so that
there exist $r,s ,r',s'\in \mathbb N$ such that 
\begin{equation}\label{E1} 
ra + sb = n,
\end{equation} 
and 
\begin{equation}\label{E2} 
r' (b-a) + s' b = bN - n.
\end{equation}
We may assume $0 \leq r,r' \leq b-1$, as we may replace $r$ with $r- b$ and $s$ with $s + a$, and
$r'$ with $r'- b$ and $s'$ with $s'+ b-a$. Now reducing \eqref{E1} and \eqref{E2} modulo $b$, we have
\[
ra \equiv n \Mod b , \ \ \ -r' a \equiv -n \Mod b.
\]
Since $(a,b) = 1$, we deduce $r\equiv r' \Mod b$. Therefore $r=r'$ as $|r-r'|<b$, and so adding \eqref{E1} and \eqref{E2} we find 
\[
r b + s b + s' b = bN.
\]
This implies that $r+s+s'=N$ and so $r+s\leq N$ which gives $n\in NA$, as desired.
\end{proof}

\section{Arbitrary postage problem with at most $N$ stamps}\label{notsoeasy}

\subsection{Sets with three or more   elements} Let 
$$A=\{ 0=a_1<a_2<\ldots <a_k=b\} \subset \mathbb{Z},$$ 
with $(a_1,\ldots,a_k)=1$. In general we have $n\in NA$ if and only if $Nb-n\in N(b-A)$, since 
\[
n = \sum_{i=1}^k m_i a_i \text{ if and only if } 
Nb-n = \sum_{i=1}^k m_i (b- a_i)
\]
where we select $m_1$ so that $\sum_{i=1}^k m_i=N$.
For $0\leq a\leq b-1$  define
\[
n_{a,A}:=\min\{ n\geq 0:\ n\equiv a \Mod b \text{ and } n\in \mathcal P(A)\} 
\]
and 
\[
N_{a,A}:=\min\{ N\geq 0:\ n_{a,A}\in NA\} 
\]
We always have $n_{0,A}=0$ and $N_{0,A}=0$. Neither $0$ nor $b$ can be a term in the sum for $n_{a,A}$ else we can remove it and contradict the definition of $n_{a,A}$. But this implies that $n_{a,A}\leq N_{a,A} \cdot \max_{c\in A: c<b} c \leq (b-1)N_{a,A}$.

\begin{lemma} \label{lem1}
If $n\equiv a \Mod b$  then $n\in \mathcal P(A)$ if and only if $n\geq n_{a,A}$.
\end{lemma}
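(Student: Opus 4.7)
The plan is to prove the two directions separately, noting that the lemma asserts that within each residue class mod $b$ the intersection with $\mathcal P(A)$ is an arithmetic progression with common difference $b$ starting at $n_{a,A}$. The only ``content'' is that $b\in A$, which lets us freely add copies of $b$ to any representation.

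For the forward direction, suppose $n\in \mathcal P(A)$ with $n\equiv a \Mod b$. Then $n$ lies in the set whose minimum is, by definition, $n_{a,A}$, hence $n\geq n_{a,A}$. This is purely definitional.

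For the backward direction, suppose $n\equiv a \Mod b$ and $n\geq n_{a,A}$. Then $n-n_{a,A}$ is a non-negative integer divisible by $b$, say $n-n_{a,A}=kb$ with $k\in \mathbb N$. Since $n_{a,A}\in \mathcal P(A)$ we may write $n_{a,A}=\sum_{a'\in A} n_{a'} a'$ with each $n_{a'}\in \mathbb N$. Because $b\in A$, we can simply replace the coefficient $n_b$ by $n_b+k$ to obtain $n = n_{a,A}+kb \in \mathcal P(A)$, as required.

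The main obstacle is essentially nonexistent: the whole statement turns on the fact that the maximal element $b$ lies in $A$, so the residue class $a\Mod b$ intersects $\mathcal P(A)$ in a final segment of the arithmetic progression $a, a+b, a+2b,\ldots$; once the progression enters $\mathcal P(A)$ at $n_{a,A}$, it stays there. No combinatorial work beyond this observation is needed.
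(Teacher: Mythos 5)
Your proof is correct and is essentially identical to the paper's: the forward direction is definitional, and the backward direction writes $n=n_{a,A}+kb$ and absorbs the extra $kb$ into the coefficient of $b\in A$ in a representation of $n_{a,A}$. No further comment is needed.
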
 

\begin{proof} If $n<n_{a,A}$ then $n\not \in \mathcal P(A)$ by the definition of $n_{a,A}$.
Write $n_{a,A}=\sum_{c\in A} n_cc$ where each $n_c\geq 0$. If $n\equiv a \Mod b$  and $n\geq n_{a,A}$ then $n=n_{a,A}+rb$ for some integer $r\geq 0$ and so $n=\sum_{c\in A, c\ne b} n_cc
(n_b+r)b\in \mathcal P(A)$.
\end{proof}

We deduce that 
\[
 \mathcal E(A)=\bigcup_{a=1}^{b-1}\ \{ 1\leq n< n_{a,A}:\  n\equiv a \Mod b\} ;
\]
We also have the following:

\begin{cor} \label{cortolem1} Suppose that $0\leq n\leq bN$ and $n\equiv a \pmod b$.
Then 
\[ n\not\in \mathcal E(A)\cup (Nb-  \mathcal E(b-A)) \text{  if and only if  } n_{a,A}\leq n\leq bN-n_{b-a,b-A} .\]  Thus there are such integers $n$ if and only if 
 $N\geq N_{a,A}^*:= \tfrac 1b (n_{a,A}+n_{b-a,b-A})$.
\end{cor}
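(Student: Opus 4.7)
The plan is to peel the condition ``$n \notin \mathcal E(A)\cup (Nb-\mathcal E(b-A))$'' into two separate pieces, one controlled by applying Lemma \ref{lem1} to $A$ and the other by applying it to $b-A$. Since $b-A$ also has smallest element $0$, largest element $b$, and $\gcd 1$, Lemma \ref{lem1} (and the displayed description of $\mathcal E(A)$ following it) applies verbatim to $b-A$.

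For the first piece: as $n\equiv a\pmod b$ with $n\ge 0$, the identity $\mathcal E(A)=\bigcup_{a=1}^{b-1}\{1\le n<n_{a,A}:n\equiv a\pmod b\}$ gives immediately that $n\notin \mathcal E(A)$ iff $n\ge n_{a,A}$ (the case $a=0$ is trivial since $n_{0,A}=0$). For the second piece, set $n':=bN-n$. Then $n'\ge 0$ by hypothesis, and $n'\equiv -a\equiv b-a\pmod b$, so the same statement applied to $b-A$ yields $n'\notin \mathcal E(b-A)$ iff $n'\ge n_{b-a,b-A}$, i.e.\ $n\le bN-n_{b-a,b-A}$. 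Since $n\in bN-\mathcal E(b-A)$ is precisely $bN-n\in \mathcal E(b-A)$, combining the two equivalences delivers the first assertion.

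For the existence statement, I would observe that the set of $n\in\mathbb Z$ with $n\equiv a\pmod b$ and $n_{a,A}\le n\le bN-n_{b-a,b-A}$ is nonempty iff the endpoint $n_{a,A}$ itself lies in the interval (since $n_{a,A}$ is congruent to $a$ modulo $b$ and is the smallest representative of the relevant AP on the left), i.e.\ iff
\[
n_{a,A}+n_{b-a,b-A}\le bN.
\]
Now $n_{a,A}+n_{b-a,b-A}\equiv a+(b-a)\equiv 0\pmod b$, so the left-hand side is a nonnegative integer multiple of $b$; dividing by $b$ turns the inequality into $N\ge N_{a,A}^*$, as required.

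There is no real obstacle here beyond bookkeeping: the only point that needs care is the uniform treatment of the residue $a$ and its complement $b-a$ (including the degenerate case $a=0$, handled by $n_{0,A}=n_{0,b-A}=0$), together with the divisibility check that makes $N_{a,A}^*$ an integer.
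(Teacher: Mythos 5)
Your proof is correct and follows exactly the route the paper intends: the corollary is stated without proof as an immediate consequence of Lemma \ref{lem1} (equivalently, the displayed description of $\mathcal E(A)$) applied to both $A$ and $b-A$, which is precisely your two-piece decomposition, and your nonemptiness argument via the left endpoint $n_{a,A}$ is the standard one. The only remark is that the divisibility observation $n_{a,A}+n_{b-a,b-A}\equiv 0\pmod b$ is not needed for the equivalence $n_{a,A}+n_{b-a,b-A}\le bN \iff N\ge N_{a,A}^*$ (it only serves to show $N_{a,A}^*$ is an integer), but this is harmless.
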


\begin{lemma}\label{Lem: Indn}
Suppose that $N_0\geq N_{a,A}^*$. 
Assume that if $0\leq n\leq bN_0$ with $n\equiv a \pmod b$, and
$n\not\in \mathcal E(A)\cup (N_0b-  \mathcal E(b-A))$ then $n\in N_0A$. Then for any integer $N\geq N_0$ we have 
$n\in NA$ whenever  $0\leq n\leq bN$ with $n\equiv a \pmod b$, and
$n\not\in \mathcal E(A)\cup (Nb-  \mathcal E(b-A))$.
\end{lemma}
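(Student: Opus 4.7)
The plan is to induct on $N \geq N_0$, after first converting the statement into a cleaner interval form. By Corollary~\ref{cortolem1}, for $n \equiv a \pmod b$ the condition ``$n \notin \mathcal{E}(A) \cup (Mb - \mathcal{E}(b-A))$'' is equivalent to $n_{a,A} \leq n \leq Mb - n_{b-a,b-A}$. Writing $n_1 := n_{a,A}$ and $n_2 := n_{b-a,b-A}$, the lemma reduces to the claim that if every $n \equiv a \pmod b$ with $n_1 \leq n \leq N_0 b - n_2$ lies in $N_0 A$, then the same holds at every level $N \geq N_0$. The base case $N = N_0$ is exactly the hypothesis.

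For the inductive step from $N-1$ to $N$ (with $N > N_0$), I would take $n \equiv a \pmod b$ satisfying $n_1 \leq n \leq Nb - n_2$, and split according to the distance from the left endpoint. If $n \geq n_1 + b$, set $n' := n - b$: then $n' \equiv a \pmod b$ and $n_1 \leq n' \leq (N-1)b - n_2$, so the induction hypothesis at level $N-1$ gives $n' \in (N-1)A$, and since $b \in A$ we conclude $n = n' + b \in NA$. Otherwise $n < n_1 + b$, and since $n \equiv n_1 \pmod b$ with $n \geq n_1$, this forces $n = n_1$. For this boundary value I would invoke the lemma's hypothesis directly at level $N_0$: the assumption $N_0 \geq N_{a,A}^* = (n_1 + n_2)/b$ gives $n_1 \leq N_0 b - n_2$, so $n_1$ is admissible at $N_0$ and the hypothesis yields $n_1 \in N_0 A$; adjoining $N - N_0$ copies of $0 \in A$ then gives $n_1 \in NA$.

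The main (and essentially only) subtlety is the boundary case $n = n_1$, where the naive shift $n \mapsto n-b$ fails because it would drop $n$ below the left endpoint of the admissible window. The hypothesis $N_0 \geq N_{a,A}^*$ is present precisely so that this single awkward value is already handled by the lemma's premise at level $N_0$; every other admissible $n$ at higher levels is then obtained by a trivial one-step reduction that adds back a copy of $b$. I do not anticipate further technical obstacles.
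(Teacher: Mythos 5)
Your proof is correct and is essentially the same one-step induction as the paper's: the paper extends the admissible range upward from level $N$ to $N+1$ and handles the single new value at the top endpoint by adding $b$ to the previous top endpoint, whereas you shift down by $b$ and handle the single boundary value $n_1=n_{a,A}$ at the bottom via the base hypothesis plus zero-padding. The two arguments are mirror images, both relying on $0,b\in A$ and on $N_0\geq N_{a,A}^*$ to ensure the boundary value is admissible at the base level.
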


\begin{proof} By induction. By hypothesis it holds for $N=N_0$. Suppose it holds for some $N\geq N_0$.
If $n\equiv a \pmod b$ with $a\leq n\leq  b(N+1)-n_{b-a,b-A}$ then either $a\leq n\leq  bN-n_{b-a,b-A}$ so that $n\in NA\subset (N+1)A$, or 
 $n=b+(bN-n_{b-a,b-A})\in b+NA\subset (N+1)A$. 
\end{proof}

 If $n_{a,A}=a_1+\cdots+a_N$ where $N=N_{a,A}$ then 
 $$bN_{a,A}-n_{a,A}=(b-a_1)+\cdots+(b-a_N)\geq n_{b-a,b-A},$$
by definition. Therefore
 \[
 N_{a,A}  \geq \tfrac 1b (n_{a,A}+n_{b-a,b-A})=N_{a,A}^*,
 \]
 and the analogous argument implies that $N_{b-a,b-A} \geq  N_{a,A}^*$.

\begin{cor} \label{cor2tolem1} 
Given a set $A$, fix  $a \Mod b$. The statement ``For all integers $N\geq 1$, for all integers $n\in [0,Nb]$ with $n\equiv a \Mod b$ we have  $n\in NA$ if and only if  $n\not\in \mathcal E(A)\cup (Nb-  \mathcal E(b-A))$'' holds true if and only if $N _{a,A}=N_{a,A}^*$.
 \end{cor}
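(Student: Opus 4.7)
The plan is to exploit the fact that the reverse direction (``$n \in NA$ implies $n \notin \mathcal E(A) \cup (Nb - \mathcal E(b-A))$'') is automatic from the definitions: any element of $NA$ lies in $\mathcal P(A)$, and symmetrically $Nb-n \in N(b-A) \subset \mathcal P(b-A)$. So the whole content of the displayed ``iff'' statement, for a fixed residue $a \bmod b$, lies in the forward implication ``$n \notin \mathcal E(A) \cup (Nb - \mathcal E(b-A)) \Rightarrow n \in NA$.''

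For the ($\Leftarrow$) direction, assume $N_{a,A} = N_{a,A}^*$. For $N < N_{a,A}^*$, Corollary~\ref{cortolem1} says there are no integers $n \equiv a \pmod b$ in $[0,bN]$ satisfying the exclusion conditions, so the implication is vacuous. For $N = N_0 := N_{a,A}^*$, Corollary~\ref{cortolem1} tells us the set of $n \equiv a \pmod b$ with $n_{a,A} \leq n \leq bN_0 - n_{b-a,b-A}$ consists of the single integer $n = n_{a,A}$, because $bN_0 - n_{b-a,b-A} = n_{a,A}$ by definition of $N_{a,A}^*$. The hypothesis $N_{a,A} = N_0$ says exactly that $n_{a,A} \in N_0 A$, so the implication holds at $N = N_0$. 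Finally, for $N > N_0$ we invoke Lemma~\ref{Lem: Indn} to extend the implication inductively from $N_0$ to all larger $N$.

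For the ($\Rightarrow$) direction, suppose the displayed ``iff'' statement holds for all $N \geq 1$. Apply it at $N = N_{a,A}^*$ with $n = n_{a,A}$: this $n$ lies in $\mathcal P(A)$ (so $n \notin \mathcal E(A)$) and satisfies $Nb - n = n_{b-a,b-A} \in \mathcal P(b-A)$ (so $Nb-n \notin \mathcal E(b-A)$). Hence the assumed implication forces $n_{a,A} \in N_{a,A}^* A$, so $N_{a,A} \leq N_{a,A}^*$. Combined with the inequality $N_{a,A} \geq N_{a,A}^*$ already established in the paragraph preceding the corollary, we conclude $N_{a,A} = N_{a,A}^*$.

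No step should present real difficulty: the argument is essentially a bookkeeping exercise that packages Corollary~\ref{cortolem1} (which identifies the ``extremal'' representative on each residue class) with Lemma~\ref{Lem: Indn} (which propagates the statement upward in $N$). The only mildly delicate point is noticing that the whole statement across all $N$ is pinned down by what happens at the single value $N = N_{a,A}^*$ for the single residue $n = n_{a,A}$, which is what makes the characterization in terms of the equality $N_{a,A} = N_{a,A}^*$ possible.
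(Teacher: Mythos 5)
Your argument is correct and is essentially identical to the paper's own proof: both reduce everything to the single value $N=N_{a,A}^*$ and the single integer $n=n_{a,A}$ via Corollary \ref{cortolem1}, propagate upward with Lemma \ref{Lem: Indn}, and close the loop using the inequality $N_{a,A}\geq N_{a,A}^*$ established just before the corollary. Your explicit remark that the ``only if'' half of the displayed statement is automatic is a small clarification the paper leaves implicit, but it does not change the substance.
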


\begin{proof}  There are no such integers $n$ if $N< N_{a,A}^*$ by Corollary  \ref{cortolem1}, so the statement is true. If the 
statement is true for  $N= N_{a,A}^*$ then it holds for all $n\geq N_{a,A}^*$ by Lemma \ref{Lem: Indn}. Finally for $N= N_{a,A}^*$, the statement claims (only) that  $n_{a,A}\in NA$. This happens if and only if $N=N_{a,A}^*\geq N _{a,A}$.  The result follows since we just proved that $N_{a,A}  \geq  N_{a,A}^*$. 
\end{proof}

In fact one can re-run the proof on $bN-a$ to see that if $N _{a,A}=N_{a,A}^*$ then 
$N _{b-a,b-A}=N_{a,A}^*$.  
Suppose $A$ has just three elements, say $A=\{0,c,b\}$ with $(c,b)=1$.
 For any non-zero $a \Mod b$ we have an integer $r, 1\leq r\leq b-1$ with $a\equiv cr \Mod b$, and one can easily show that 
$n_{a,A}=cr$ while $N_{a,A}=r$. Now $b-A=\{0,b-c,b\}$ so that 
$n_{b-a,b-A}=(b-c)r$ while $N_{b-a,b-A}=r$. Therefore
$N_{a,A}=N_{b-a,b-A}=N_{a,A}^*=\tfrac 1b(n_{a,A}+n_{b-a,b-A})$ for every $a$, and so we recover Theorem \ref{Thm0} from Corollary \ref{cor2tolem1}.

However Theorem \ref{Thm1} does not hold for all $N\geq 1$ for some sets $A$ of size $4$.
For example, if $A=\{ 0,1,b-1,b\}$ then $b-A=A$. We have $n_{a,A}=a$ for $1\leq a\leq b-1$, and so $N_{a,A}^*=1$, but  $N_{a,A}=a$ for $1\leq a\leq b-2$, and so Theorem \ref{Thm1} does not hold for all $N\geq 1$ by Corollary \ref{cor2tolem1}. In fact since $N_{b-2,b}=b-2>N_{b-2,b}^*=1$, if the statement ``if $n\leq Nb$ and $n\not\in \mathcal E(A)\cup (Nb-  \mathcal E(b-A))$ then $n\in NA$'' is true then $N\geq b-2$.

It would be interesting to have a simple criterion for the set $A$ to have the property that $N _{a,A}=N_{a,A}^*$ for all $a \Mod b$
(so that Corollary \ref{cor2tolem1} takes effect).
Certainly many sets $A$ do not have this property; For example if there exists an integer $a,\ 1\leq a\leq b-1$ such that 
$a\not\in A$ but $a, b+a\in 2A$, then $n_{a,A}=a,\ n_{b-a,b-A}=b-a$, so that $N _{a,A}=2$ and $N_{a,A}^*=1$.

\subsection{Proving a ``sufficiently large'' result}
 
 We begin getting bounds by proving the following.
 
 \begin{prop} \label{keyprop}
Fix $0\leq a\leq b-1$ and suppose $N\geq N_{a,A}+N_{b-a,b-A}$.
 If $0 \leq n\leq Nb$ with $n\equiv a \Mod b$ and $n\not\in \mathcal E(A)\cup (Nb-  \mathcal E(b-A))$ then $n\in NA$.  
 \end{prop}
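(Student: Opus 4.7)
The plan is to exploit the duality $n \in NA \iff Nb-n \in N(b-A)$ noted at the start of Section \ref{notsoeasy}, together with Lemma \ref{lem1} applied symmetrically to both $A$ and $b-A$. This will give us two independent ways of trying to represent $n$ using at most $N$ stamps, and I will show that when $N \geq N_{a,A} + N_{b-a,b-A}$, at least one of the two must succeed.

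First I would extract the two key representations from the hypotheses. Since $n \geq 0$ and $n \notin \mathcal{E}(A)$ we have $n \in \mathcal{P}(A)$, and combined with $n \equiv a \Mod b$, Lemma \ref{lem1} yields $n \geq n_{a,A}$, so $n = n_{a,A} + rb$ for some integer $r \geq 0$. The dual hypothesis $Nb-n \notin \mathcal{E}(b-A)$, together with $Nb-n \equiv b-a \Mod b$, gives by the same lemma applied to $b-A$ that $Nb - n = n_{b-a,b-A} + sb$ for some integer $s \geq 0$.

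Next, since $0 \in A$, any representation of $n_{a,A}$ as a sum of $N_{a,A}$ elements of $A$ extends by padding with zeros to a representation with $M$ summands for any $M \geq N_{a,A}$. Adding $r$ copies of $b$ shows $n \in (N_{a,A} + r)A$, so $n \in NA$ provided $r \leq N - N_{a,A}$. By the symmetric argument applied to $b-A$ we have $Nb - n \in (N_{b-a,b-A} + s)(b-A)$, and invoking the duality we get $n \in NA$ provided $s \leq N - N_{b-a,b-A}$.

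The final step is to rule out the case where both inequalities fail. Summing the two defining equations gives $(r+s)b = Nb - n_{a,A} - n_{b-a,b-A}$; since $n_{a,A} + n_{b-a,b-A} \equiv 0 \Mod b$, this yields the identity $r + s = N - N_{a,A}^*$ with $N_{a,A}^* = (n_{a,A} + n_{b-a,b-A})/b \geq 0$. On the other hand, if both conditions above failed, we would have $r \geq N - N_{a,A} + 1$ and $s \geq N - N_{b-a,b-A} + 1$, whence $r + s \geq 2N + 2 - N_{a,A} - N_{b-a,b-A}$. Combining the two would force $N \leq N_{a,A} + N_{b-a,b-A} - N_{a,A}^* - 2$, contradicting the hypothesis $N \geq N_{a,A} + N_{b-a,b-A}$. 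There is no real obstacle once the symmetric setup is in place; the entire argument is a short accounting exercise built on Lemma \ref{lem1} and the duality noted earlier.
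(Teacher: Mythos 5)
Your proof is correct and takes essentially the same route as the paper: the paper writes $n = n_{a,A}+kb$ (its Proposition \ref{P1}) and shows by summing $n + (Nb-n) = Nb$ that either $n\le (N-N_{a,A})b$ or $Nb-n\le (N-N_{b-a,b-A})b$, which is exactly your dichotomy on $r$ and $s$ phrased via the identity $r+s = N - N_{a,A}^*$. Your bookkeeping is marginally sharper (it would even allow $N \ge N_{a,A}+N_{b-a,b-A}-N_{a,A}^*-1$), but the decomposition, the use of Lemma \ref{lem1} on both $A$ and $b-A$, and the duality are the same.
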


\begin{cor} \label{Cor2} 
If $0 \leq n\leq Nb$ and $n\not\in \mathcal E(A)\cup (Nb-  \mathcal E(b-A))$ then $n\in NA$, 
whenever $N\geq \max_{1\leq a\leq b-1} N_{a,A}+N_{b-a,b-A}$.
\end{cor}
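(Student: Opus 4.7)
The plan is to represent $n$ as a sum of at most $N$ elements of $A$ using one of two complementary strategies --- a ``direct'' one lifting a minimal representation of $n_{a,A}\in\mathcal P(A)$, and a ``dual'' one lifting a minimal representation of $n_{b-a,b-A}\in\mathcal P(b-A)$ --- and then to show that the hypothesis $N\geq N_{a,A}+N_{b-a,b-A}$ forces at least one of the two strategies to stay within the budget of $N$ summands.

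First I would convert the hypothesis ``$n\not\in\mathcal E(A)\cup(Nb-\mathcal E(b-A))$'' into the two inequalities $n\geq n_{a,A}$ and $Nb-n\geq n_{b-a,b-A}$, which is exactly Corollary \ref{cortolem1}. Since $n\equiv n_{a,A}\Mod{b}$ and $Nb-n\equiv n_{b-a,b-A}\Mod{b}$, I write
\[
n = n_{a,A}+kb\qquad\text{and}\qquad Nb-n = n_{b-a,b-A}+mb
\]
with integers $k,m\geq 0$. Adding gives the pivotal accounting identity
\[
k+m \;=\; N - \frac{n_{a,A}+n_{b-a,b-A}}{b} \;=\; N-N_{a,A}^* \;\leq\; N.
\]

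Next I would run the two strategies. By the definition of $N_{a,A}$, there is a representation $n_{a,A}\in N_{a,A}A$; appending $k$ copies of $b\in A$ produces $n\in(N_{a,A}+k)A$, and since $0\in A$ ensures the chain $MA\subset(M+1)A$, this gives $n\in NA$ as soon as $N\geq N_{a,A}+k$. Dually, $n_{b-a,b-A}\in N_{b-a,b-A}(b-A)$; appending $m$ copies of $b=b-0\in b-A$ gives $Nb-n\in(N_{b-a,b-A}+m)(b-A)$, and then the symmetry $n\in MA\iff Mb-n\in M(b-A)$ combined with $0\in b-A$ yields $n\in NA$ as soon as $N\geq N_{b-a,b-A}+m$.

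Finally, the core step: if neither strategy works, then the integer inequalities $k\geq N-N_{a,A}+1$ and $m\geq N-N_{b-a,b-A}+1$ both hold, so
\[
k+m \;\geq\; 2N-(N_{a,A}+N_{b-a,b-A})+2 \;\geq\; N+2
\]
by the hypothesis $N\geq N_{a,A}+N_{b-a,b-A}$, contradicting $k+m\leq N$. The only non-routine ingredient is this concluding pigeonhole-style comparison; everything else is bookkeeping with the definitions of $n_{\cdot,\cdot}$ and $N_{\cdot,\cdot}$. Conceptually, $N_{a,A}$ measures the ``upward cost'' of reaching the residue class $a\Mod{b}$ from $0$, while $N_{b-a,b-A}$ measures the ``downward cost'' of reaching it from $Nb$, and beyond their sum at least one of the two directions must still fit inside the $N$-summand budget.
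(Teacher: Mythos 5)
Your proof is correct and is essentially the paper's argument: the paper proves this via Propositions \ref{P1} and \ref{keyprop}, where the same dichotomy (either the lift of $n_{a,A}$ by copies of $b$ fits in the budget, or the dual lift of $n_{b-a,b-A}$ does) is established by adding the two failure inequalities to contradict $n+(bN-n)=bN$, which is your $k+m\leq N$ accounting in the variable $n$ rather than $k$.
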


To prove Proposition \ref{keyprop}, we need the following.

\begin{prop} \label{P1} Fix $1\leq a\leq b-1$.
If $n\leq (N-N_{a,A})b$ with $n\equiv a \Mod b$ and $n\not\in \mathcal E(A)$ then $n\in NA$.
\end{prop}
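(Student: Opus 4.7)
The plan is to deduce this directly from Lemma \ref{lem1} together with the definitions of $n_{a,A}$ and $N_{a,A}$, essentially by starting from a shortest representation of $n_{a,A}$ and topping it up with copies of $b$.

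First I would unpack the hypothesis: since $n \notin \mathcal{E}(A)$ we have $n \in \mathcal{P}(A)$, and since $n \equiv a \Mod b$, Lemma \ref{lem1} tells us that $n \geq n_{a,A}$. Thus we may write
\[
n = n_{a,A} + rb
\]
for some integer $r \geq 0$, determined uniquely by $n$ (both $n$ and $n_{a,A}$ lie in the same residue class modulo $b$).

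Next, by the definition of $N_{a,A}$, there is a representation $n_{a,A} = c_1 + \cdots + c_{N_{a,A}}$ with each $c_i \in A$. Appending $r$ copies of $b \in A$ to this expression produces a representation
\[
n = c_1 + \cdots + c_{N_{a,A}} + \underbrace{b + \cdots + b}_{r \text{ times}}
\]
of $n$ as a sum of exactly $N_{a,A} + r$ elements of $A$. If this count is at most $N$, we may pad with copies of $0 \in A$ to conclude that $n \in NA$.

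It remains only to check that $N_{a,A} + r \leq N$. Since $n_{a,A} \geq 0$, we have $r = (n - n_{a,A})/b \leq n/b$, and the hypothesis $n \leq (N - N_{a,A})b$ then yields $r \leq N - N_{a,A}$, as required. There is really no serious obstacle here: the whole content of the argument is that Lemma \ref{lem1} lets us reduce any admissible $n$ to the ``canonical'' $n_{a,A}$ plus a nonnegative multiple of $b$, and the economy of the representation is controlled exactly by $N_{a,A}$, which is what the bound $n \leq (N-N_{a,A})b$ is designed to absorb.
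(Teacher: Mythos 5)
Your proof is correct and follows essentially the same route as the paper's: both deduce $n\geq n_{a,A}$ from $n\notin\mathcal E(A)$, write $n=n_{a,A}+rb$ with $0\leq r\leq N-N_{a,A}$, and combine a representation of $n_{a,A}$ as a sum of $N_{a,A}$ elements of $A$ with $r$ copies of $b$ (the paper phrases this as $n\in N_{a,A}A+(N-N_{a,A})A=NA$, which implicitly includes your padding-by-zeros step). No issues.
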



\begin{proof} If $n\not\in \mathcal E(A)$ then  $n\geq n_{a,A}$ by the definition of $n_{a,A}$.
Therefore $n=n_{a,A}+kb$ where $0\leq kb\leq n \leq (N-N_{a,A})b$, so that  
$0\leq k\leq N-N_{a,A}$ and $kb\in (N-N_{a,A})A$. Now
$n_{a,A}\in N_{a,A}A$ and so
$n=n_{a,A}+ kb \in N_{a,A}A + (N-N_{a,A})A=NA$.
\end{proof}

\begin{proof} [Proof of Proposition \ref{keyprop}] This is trivial for $a=0$. Otherwise, by hypothesis $n\not\in \mathcal E(A)$ and $bN-n\not\in \mathcal E(b-A)$.
Moreover either $n\leq (N-N_{a,A})b$ or $bN-n\leq (N-N_{b-a,b-A})b$,   else 
\[
bN=n+(bN-n)>(N-N_{a,A})b+(N-N_{b-a,b-A})b=(2N-N_{a,A}A-N_{b-a,b-A})b\geq Nb,
\]
which is impossible. Therefore Proposition \ref{keyprop} either follows by applying Proposition \ref{P1} to $A$, or by applying Proposition \ref{P1} to $b-A$ to obtain $Nb-n\in N(b-A)$ which implies $n\in NA$.
\end{proof}



It remains to bound $N_{a,A}$. We start with the following.

\begin{lemma}\label{lem1} 
 We have $N_{a,A}\leq b-1$.
If $A=\{ 0,1,b\}$ then $N_{a,A}=b-1$.
\end{lemma}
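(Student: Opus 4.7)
The plan is to exploit the minimality of $n_{a,A}$ to control how many summands appear in its representation. Start from an optimal representation $n_{a,A}=c_1+c_2+\cdots +c_k$ with $k=N_{a,A}$ summands from $A$. First I would remove easy cases: no summand equals $0$ (else we could drop it and shorten $k$, contradicting minimality of $k$), and no summand equals $b$ (else we could drop it to get a smaller non-negative integer still $\equiv a \pmod b$ lying in $\mathcal P(A)$, contradicting the minimality of $n_{a,A}$ via Lemma \ref{lem1}). So every $c_i$ lies in $A\cap [1,b-1]$.

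Next, the key step is a pigeonhole on the partial sums $s_0=0,\ s_1=c_1,\ s_2=c_1+c_2,\ \ldots,\ s_k=n_{a,A}$ modulo $b$. Suppose $s_i\equiv s_j\pmod b$ for some $0\le i<j\le k$. Then $s_j-s_i$ is a positive multiple of $b$, and the sum
\[
n_{a,A}-(s_j-s_i)=c_1+\cdots+c_i+c_{j+1}+\cdots+c_k
\]
is a non-negative integer congruent to $a\pmod b$ that lies in $\mathcal P(A)$ and is strictly smaller than $n_{a,A}$, contradicting the definition of $n_{a,A}$. Therefore the $k+1$ partial sums are pairwise distinct modulo $b$, which forces $k+1\le b$, i.e.\ $N_{a,A}\le b-1$.

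For the second assertion, take $A=\{0,1,b\}$ and any $a\in \{1,\dots,b-1\}$. Since $a\in \mathcal P(A)$ and $a$ is already a non-negative integer $<b$ congruent to $a\pmod b$, we have $n_{a,A}=a$. Any representation of $a$ using elements of $\{0,1,b\}$ cannot use $b$ (since $a<b$) so must use exactly $a$ copies of $1$ (plus optional $0$'s, which only increase the count). Hence $N_{a,A}=a$, and specializing to $a=b-1$ yields $N_{b-1,A}=b-1$, showing the bound is tight.

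I do not anticipate a significant obstacle; the minor subtlety is simply being careful that minimality of $n_{a,A}$ is used to exclude the summand $b$, while minimality of $k=N_{a,A}$ is used to exclude the summand $0$, and that both are in force during the pigeonhole step.
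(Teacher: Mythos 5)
Your proof is correct and follows essentially the same route as the paper: a pigeonhole argument on the $k+1$ partial sums modulo $b$, removing a subsum that is $\equiv 0 \pmod b$ to contradict the minimality of $n_{a,A}$, together with the explicit computation for $A=\{0,1,b\}$. Your preliminary exclusion of the summands $0$ and $b$ is a small extra dose of care (it guarantees the removed subsum is a \emph{positive} multiple of $b$, a point the paper glosses over), but it is the same argument.
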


\begin{proof}
Suppose that $n_{a,A}=a_1+a_2+\cdots+a_r$ with each $a_i\in A$,  and $r$ minimal. We have $r< b$ else two of
$0,a_1,a_1+a_2,\ldots,a_1+\cdots +a_b$ are congruent mod $b$ by the pigeonhole principle, so their difference, which is a subsum of the $a_i$'s is $\equiv 0 \Mod b$. If these $a_i$'s are removed from the sum then we obtain a smaller element of $ \mathcal P(A)$ that is $\equiv a \Mod b$, contradicting the definition of $n_{a,A}$.  We deduce that $N_A\leq b-1$. 
If $A= \{0,1,b\}$ then $b-1 \notin (b-2)A$ and so $N_A\geq b-1$.
\end{proof}

\begin{cor} \label{cor1} 
Suppose that $N\geq 2b-2$.
If $n\leq Nb$ and $n\not\in \mathcal E(A)\cup (Nb-  \mathcal E(b-A))$ then $n\in NA$.
\end{cor}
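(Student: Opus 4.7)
The plan is to obtain Corollary \ref{cor1} as an immediate synthesis of Corollary \ref{Cor2} and Lemma \ref{lem1}. Corollary \ref{Cor2} already reduces the problem to bounding the quantity $\max_{1 \leq a \leq b-1}\bigl(N_{a,A} + N_{b-a,b-A}\bigr)$, and Lemma \ref{lem1} supplies the uniform bound $N_{a,A} \leq b-1$. So the task reduces to applying Lemma \ref{lem1} symmetrically to both $A$ and $b-A$ and then substituting.

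First I would verify that the reflected set $b-A = \{b-a : a \in A\}$ satisfies the same standing hypotheses as $A$. Since $b \in A$ we have $0 \in b-A$; since $0 \in A$ the largest element of $b-A$ is $b$; and since $\gcd(b, a) = \gcd(b, b-a)$ for every $a \in A$, the gcd of the elements of $b-A$ equals the gcd of the elements of $A$, which is $1$. Thus Lemma \ref{lem1} applies to $b-A$ exactly as it does to $A$, yielding $N_{b-a, b-A} \leq b-1$ for every residue $a \pmod b$.

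Combining the two bounds gives
$$N_{a,A} + N_{b-a, b-A} \leq 2(b-1) = 2b - 2$$
uniformly in $a$, so that
$$\max_{1 \leq a \leq b-1}\bigl(N_{a,A} + N_{b-a,b-A}\bigr) \leq 2b-2.$$
The hypothesis $N \geq 2b-2$ therefore meets the threshold of Corollary \ref{Cor2}, which then delivers the desired conclusion.

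Since the substantive work has already been carried out in Proposition \ref{keyprop} and Lemma \ref{lem1}, there is no real obstacle; the only mild care needed is the verification that $b-A$ inherits the structural properties required to invoke Lemma \ref{lem1}, and this is routine.
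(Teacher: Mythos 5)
Your proof is correct and follows exactly the paper's own route: insert the bound $N_{a,A}, N_{b-a,b-A} \leq b-1$ from Lemma \ref{lem1} into the threshold $N \geq \max_{1\leq a\leq b-1} \bigl(N_{a,A}+N_{b-a,b-A}\bigr)$ of Corollary \ref{Cor2}. The extra check that $b-A$ inherits the standing hypotheses (contains $0$ and $b$, gcd equal to $1$) is a sensible bit of care that the paper leaves implicit.
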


\begin{proof} Insert the bounds $N_{a,A}, N_{b-a,b-A}\leq b-1$ from Lemma \ref{lem1} into Corollary \ref{Cor2}.
\end{proof}

 \subsection{The proof of Theorem \ref{Thm1}}
 
 With more effort we now prove Theorem \ref{Thm1}, improving upon Corollary~\ref{cor1} by a factor of 2, and getting close to the best possible bound $b-2$ (which, as we have seen, is as good as can be attained when $A=\{ 0,1,b-1,b\}$).
 One cannot obtain a better consequence of  Corollary \ref{Cor2}   since we have the following examples:
\smallskip

If $A=\{ 0, 1, b-1, b\}$ then    $N_{[\tfrac b2],A}+N_{b-[\tfrac b2],b-A}=2[\tfrac b2]$.
 
If $A=\{ 0, 1, 2, b\}$  with $b$ even then    $N_{b-1,A}+N_{1,b-A}=b$.  This is a particularly interesting case as one can verify that one has ``If $n\leq Nb$ and $n\not\in \mathcal E(A)\cup (Nb-  \mathcal E(b-A))$ then $n\in NA$'' \emph{for all} $N\geq 1$.
 
\smallskip

We can apply Corollary \ref{Cor2} to obtain Theorem \ref{Thm1} provided $N_{a,A},N_{b-a,b-A}\leq [\tfrac b2]$ for each $a$. Therefore we need to classify those $A$ for which $N_{a,A}> \tfrac b2$

Let $(t)_b$ is the least non-negative residue of $t \Mod b$. 

Suppose that $1\leq a\leq b-1$, and write $n_a=n_{a,A}=  a_1+\cdots+a_{m}$ where $m=N_{a,A}$ is minimal. No subsum of $a_1+\cdots+a_{m}$ can sum to $\equiv 0 \Mod b$  else we remove this subsum from the sum to   get a smaller sum of  elements of $A$ which is $\equiv a \Mod b$, contradicting the definition of $n_a$.  Also the complete sum cannot be $\equiv 0 \Mod b$ else $a=0$ and $m=0$.  Let $k=m+1$ and $a_k= -(a_1+\cdots+a_{m})$, so that 
$a_1+\cdots+a_{k}\equiv 0 \Mod b$ and no proper subsum is $0 \pmod b$; we call this a 
\emph{minimal zero-sum}. The  Savchev-Chen structure theorem \cite{Sav} states that if $k\geq [\tfrac b2]+2$ then
$a_1+\cdots+a_{k}\equiv 0 \Mod b$ is a minimal zero-sum if and only if there is a reduced residue $w \Mod b$ and  positive integers
$c_1,\dots,c_k$ such that $\sum_j c_j=b$ and  
$a_j\equiv wc_j \Mod b$ for all $j$.

\begin{theorem}\label{SC1}
If $N_{a,A}> \tfrac b2$ then $n_{a,A}$ is the sum of  $N_{a,A}$ copies of some integer $h, 1\leq h\leq b-1$ with $(h,b)=1$.  Moreover if $k\in A$ with $\ell\ne h$  then $(k/h)_b\geq  N_{a,A}+1$. 
\end{theorem}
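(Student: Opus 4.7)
The plan is to apply the Savchev--Chen structure theorem to the minimal zero-sum sequence built from a shortest representation of $n_{a,A}$, and then to exploit the minimalities of $m=N_{a,A}$ and of $n_{a,A}$ to extract both the integer $h$ and the ``moreover'' bound. First, fix a representation $n_{a,A} = a_1 + \cdots + a_m$ with $m = N_{a,A}$ and $a_j \in A$. Neither $0$ nor $b$ can appear among the $a_j$: a $0$ could be deleted (contradicting minimality of $m$), and a $b$ could be deleted to yield a smaller positive representative of $a \Mod{b}$ (contradicting minimality of $n_{a,A}$). So each $a_j \in A \cap \{1,\ldots,b-1\}$. Append $a_{m+1} := -n_{a,A}$; then $(a_1,\ldots,a_{m+1})$ is a zero-sum in $\mathbb{Z}/b\mathbb{Z}$ of length $k = m+1 \ge [b/2]+2$ (using $m > b/2$). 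I would verify this is a \emph{minimal} zero-sum: any proper subsum $\equiv 0 \Mod{b}$ can be deleted (directly, if it lies entirely in $\{a_1,\ldots,a_m\}$, or via its complement otherwise) to produce either a smaller element of $\mathcal{P}(A)\cap (a+b\mathbb{Z}_{\ge 0})$ or a representation of $n_{a,A}$ with fewer than $m$ terms, contradicting one of the two minimality hypotheses. Savchev--Chen then supplies a reduced residue $w \Mod{b}$ and positive integers $c_1,\ldots,c_{m+1}$ with $\sum_j c_j = b$ and $a_j \equiv w c_j \Mod{b}$; in particular each $c_j \le b - m < b/2 \le m$.

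The crux---and what I expect to be the main obstacle---is to deduce that $c_1 = c_2 = \cdots = c_m$. Let $c^* := \min_{j\le m} c_j$ and set $h := (wc^*)_b \in A$. I would argue by contradiction that if some $c_\ell > c^*$, then one can combine several elements of the decomposition into a strictly shorter one: for $r$ copies of $h$ and one $a_\ell$, consider the putative element of $A$ with $c$-value $rc^* + c_\ell$ (possibly together with a copy of $b \in A$ to absorb a carry when the $\mathbb{Z}$-arithmetic crosses a multiple of $b$), the shortening being by $r$ terms. The sharp bound $c_j \le b-m$ keeps $rc^* + c_\ell$ within the $c$-range of legitimate elements of $A$; if the combined element is not in $A$, one iterates the same reasoning using the other $c_j$'s, arriving via a pigeonhole count at the conclusion that only the configuration $c_1 = \cdots = c_m = c^*$ is compatible with $m = N_{a,A}$. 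This yields $a_j = h$ for $j \le m$ and hence $n_{a,A} = mh$ with $h \in A$. For $\gcd(h,b)=1$: if $d := \gcd(h,b) > 1$ then $d \mid n_{a,A}$ and $d \mid a$, and since $\gcd(A) = 1$ gives some $k \in A$ with $d \nmid k$, mixing a suitable multiple of $k$ into the representation of $a \Mod{b}$ produces a strictly smaller positive representative, contradicting the minimality of $n_{a,A}$.

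For the ``moreover'' clause, take $k \in A$ with $k \notin \{0,h,b\}$ and set $\ell := (k/h)_b \in \{1,\ldots,b-1\}$; suppose $\ell \le m$ for contradiction. Writing $k = \ell h - qb$ with $q = \lfloor \ell h/b \rfloor$, choose the minimal $s \ge 1$ with $s\ell \equiv m \Mod{b}$, and observe that $s$ copies of $k$ sum to $sk$ with $sk \equiv n_{a,A} \Mod{b}$. A short calculation gives $sk = mh + b(ht - sq)$ with $t = (s\ell - m)/b \ge 0$, so the carry bound $ht \le sq$ yields $sk \le n_{a,A}$; strict inequality $sk < n_{a,A}$ contradicts the minimality of $n_{a,A}$, while equality $sk = n_{a,A}$ forces $s \le m$, in fact $s < m$ once $\ell \ge 2$, contradicting $m = N_{a,A}$. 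Hence $\ell \ge m+1$ for every such $k$. The main technical difficulty throughout is the carry bookkeeping that makes the inequality $ht \le sq$ (and its analogue in the combining step) come out cleanly, especially when $h$ is close to $b$; I expect that the Savchev--Chen structure, together with the tight bound $c_j \le b-m$, is exactly what is needed to rule out the borderline cases.
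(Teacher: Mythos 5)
Your setup is right: building the minimal zero-sum of length $m+1\geq [\tfrac b2]+2$ from a shortest representation of $n_{a,A}$ and invoking Savchev--Chen is exactly the correct first move, and your verification of minimality of that zero-sum is fine. But the step you yourself flag as ``the crux'' --- deducing $c_1=\cdots=c_m$ --- is not actually proved: the ``combine $r$ copies of $h$ with one $a_\ell$, iterate, pigeonhole'' sketch never identifies which element of $A$ the combined $c$-value $rc^*+c_\ell$ would correspond to, nor why the iteration terminates in the all-equal configuration, so as written it is a plan rather than an argument. The missing ideas are two short observations. First, since $\sum_{j=1}^{m+1}c_j=b$ and $m>\tfrac b2$, not all of $c_1,\dots,c_m$ can be $\geq 2$, so some $c_j=1$; hence $h:=(w)_b$ itself lies in $A$ and $(h,b)=1$ is automatic. (Your separate argument for $(h,b)=1$ does not work: if $d\mid a$ and $d\mid b$ then \emph{every} integer $\equiv a\Mod b$ is divisible by $d$, so ``mixing in a multiple of $k$'' cannot produce a smaller representative and yields no contradiction.) Second, letting $n\geq 1$ count the indices $j\leq m$ with $c_j=1$, the exchange is needed in only one clean instance: if $(\ell h)_b\in A$ then $n\leq \ell-1$, since otherwise one replaces $\ell$ copies of $h$ by one copy of $(\ell h)_b$, lowering either the value of the sum (if $(\ell h)_b<\ell h$, contradicting minimality of $n_{a,A}$) or the number of summands (if $(\ell h)_b=\ell h$, contradicting minimality of $m$). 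Applying this with $\ell=k$, the smallest $c_j$-value exceeding $1$ among $j\leq m$ --- for which $(kh)_b=(wk)_b$ is automatically one of the $a_j\in A$ --- gives $k\geq n+1$, whence
\[
b-1\ \geq\ \sum_{j=1}^m c_j\ \geq\ n+(m-n)k\ \geq\ m+(m-n)n ,
\]
and $m>\tfrac b2$ rules out $1\leq n\leq m-1$, forcing $n=m$. This counting inequality is the engine of the proof and is absent from your writeup.

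The ``moreover'' clause then falls out of the same exchange: once $n=m$, the bound $n\leq\ell-1$ says precisely that $(\ell h)_b\in A$ with $\ell\neq 1$ implies $\ell\geq m+1$. Your alternative route via $s$ copies of $k$ with $s\ell\equiv m\Mod b$ rests on the inequality $ht\leq sq$, which you acknowledge you cannot verify (and it also presupposes that such an $s$ exists, i.e.\ that $\gcd(\ell,b)\mid m$); it is both harder and unnecessary.
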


\begin{proof}
Above we have $k=m+1=N_{a,A}+1\geq [\tfrac b2]+2$  so we can apply the Savchev-Chen structure theorem. Some $c_j$ with $j\leq m$ must equal $1$ else $b=\sum_{j=1}^m c_j\geq 2m>b$, a contradiction.
Hence $h\in A$ where $h=(w)_b$. Let $n:=\#\{ j\in [1,m]: c_j=1\}=\#\{ j\in [1,m]: a_j=h\}\geq 1$.

If $(\ell h)_b\in A$ where $1\leq \ell <b$ then $n\leq \ell -1$ else  we can remove $\ell $
copies of $h$ from the original sum for $n_{a,A}$ and replace them by one copy of $(\ell h)_b$. If $(\ell h)_b<\ell h$ then this makes the sum smaller, contradicting the definition of $n_a$. Otherwise this makes the number of summands smaller contradicting the definition of $N_{a,A}$.
 
Therefore if $k$ is the smallest $c_j$-value $>1$, with $1\leq j\leq m$, then $(k h)_b\in A$  so that
$k\geq n+1$,
and so
\[
b-1\geq \sum_{j=1}^m c_j \geq n\times 1+(m-n)\times k=m+(m-n)(k-1)\geq m+(m-n)n.
\]
If $1\leq n\leq m-1$ then this gives $b-1\geq m+(m-1)>b-1$, a contradiction. Hence $n=m$; that is, $n_a=h+h+\cdots+h$. Therefore $hm\equiv a \Mod b$. Moreover if $(\ell h)_b\in A$ with $\ell \ne 1$ then $\ell \geq n+1= m+1$. 
\end{proof}

We now  give a more precise version of the argument in Proposition \ref{P1}.  

 \begin{prop} \label{P3}
Fix $0\leq a\leq b-1$ and suppose $N\geq \max\{ N_{a,A}, N_{b-a,b-A}\}$.
For all $0 \leq n\leq Nb$ with $n\equiv a \Mod b$ and $n\not\in \mathcal E(A)\cup (Nb-  \mathcal E(b-A))$ we have that $n\in NA$,
except perhaps if $ n=n_{a,A}+jb$ where 
\begin{equation} \label{eq: missingrange}
N-N_{a,A}<j<N_{b-a,b-A}-\frac 1b( n_{a,A}+n_{b-a,b-A}).
\end{equation}
 \end{prop}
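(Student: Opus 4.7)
The plan is to parametrize $n$ by a non-negative integer $j$, and then run the argument behind Proposition~\ref{P1} simultaneously on $A$ and on its reflection $b-A$, checking that every $j$ outside the stated gap is handled by one of the two sides.

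First I would observe that $n \equiv a \equiv n_{a,A} \Mod{b}$ together with $n \notin \mathcal{E}(A)$ force $n \ge n_{a,A}$ by the definition of $n_{a,A}$, so that $n = n_{a,A} + jb$ for some integer $j \ge 0$. Symmetrically, $Nb - n \notin \mathcal{E}(b-A)$ and $Nb - n \equiv b-a \Mod{b}$ give $Nb - n = n_{b-a,\,b-A} + j'b$ for some integer $j' \ge 0$. Summing these two expressions yields the key identity
\[
j + j' \;=\; N \;-\; \tfrac{1}{b}\bigl( n_{a,A} + n_{b-a,\,b-A} \bigr).
\]

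The main step is then the observation that whenever $j \le N - N_{a,A}$, writing $n_{a,A}$ as a sum of $N_{a,A}$ elements of $A$, appending $j$ copies of $b \in A$, and padding with $N - N_{a,A} - j \ge 0$ zeros exhibits $n$ as a sum of exactly $N$ elements of $A$, hence $n \in NA$. This is the same mechanism used in Proposition~\ref{P1}, but applied directly to the count $N_{a,A} + j \le N$ of summands, rather than to the coarser size estimate $n \le (N - N_{a,A})b$. Running the same argument with $A$ replaced by $b-A$ and $n$ replaced by $Nb - n$ gives $Nb - n \in N(b-A)$, and hence $n \in NA$ by the reflection identity $n = Nb - (Nb-n)$, whenever $j' \le N - N_{b-a,\,b-A}$; via the identity above, this second condition is equivalent to
\[
j \;\ge\; N_{b-a,\,b-A} \,-\, \tfrac{1}{b}\bigl( n_{a,A} + n_{b-a,\,b-A} \bigr).
\]
The integers $j$ covered by neither condition are precisely those satisfying \eqref{eq: missingrange}.

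The case $a=0$ is dispatched at the outset (all of $n_{a,A}, n_{b-a,\,b-A}, N_{a,A}, N_{b-a,\,b-A}$ vanish, the gap is empty, and $n = jb \in NA$ trivially for $0 \le j \le N$). I do not anticipate a substantive obstacle: the argument is essentially a book-keeping refinement of Proposition~\ref{keyprop} that pinpoints precisely where the two one-sided applications of Proposition~\ref{P1} may fail to overlap, and the content of the proposition is exactly the width of that gap.
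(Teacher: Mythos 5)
Your proposal is correct and follows essentially the same route as the paper: both exhibit $n_{a,A}+jb\in(N_{a,A}+j)A\subset NA$ for $j\le N-N_{a,A}$, run the reflected argument on $b-A$, and identify the uncovered $j$ with the range \eqref{eq: missingrange}. Your explicit use of the identity $j+j'=N-\tfrac1b(n_{a,A}+n_{b-a,b-A})$ is just a slightly more bookkept version of the paper's direct comparison of the two covered families.
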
 

 \begin{proof} Since $n_{a,A}\in N_{a,A}A$, we have 
 \[
 n_{a,A}+jb \in (N_{a,A}+j)A\in NA \text{ whenever } 0\leq j\leq N-N_{a,A}.
 \]
 The analogous statement for $b-A$ implies that 
 \[
 bN_{b-a,b-A}-n_{b-a,b-A}+ib \in NA  \text{ whenever } 0\leq i\leq N-N_{b-a,b-A}.\qedhere
 \]
 \end{proof}

 \begin{proof}  [Proof of Theorem \ref{Thm1}]
Suppose that $N\geq N_0:=2[\tfrac b2]\geq b-1$. We will prove the result now for $N=N_0$; the result for all $N\geq N_0$ follows from Lemma \ref{Lem: Indn}.

If $N_{a,A},N_{b-a,b-A}\leq [\tfrac b2]$ then the result follows from  Proposition \ref{keyprop}.
Hence we may assume that $N_{a,A}> [\tfrac b2]$ (if necessary changing $A$ for $b-A$).

Theorem \ref{SC1} implies there exists an integer $h, 1\leq h\leq b-1$ with $(h,b)=1$ such that 
$n_{a,A}=N_{a,A}\times h$. We already proved the result when $A$ has three elements, so we may now assume it has a fourth, say $\{ 0,h,\ell,b\}\subset A$.

Let $\mathcal B=\{ 0,h,\ell\}\subset \mathbb Z/b\mathbb Z$.  Since $\mathcal B$ is not contained in any proper subgroup of 
$\mathbb Z/b\mathbb Z$ (as $(h,b)=1$), Kneser's theorem implies that $|k\mathcal B|\geq 2k+1$. 

For $N_0-N_{a,A}\leq k\leq \tfrac{b-1}2$,   let $S:=2k-b+N_{a,A}+1$ so that  there are $ b-2k$ elements in $\{ Sh, (S+1)h,\dots, N_{a,A}h\}$. By the pigeonhole principle,  $sh\in k\mathcal B$ for some $s, S\leq s\leq N_{a,A}$ and therefore $sh+tb=a_1+\dots+a_k$ where each $a_i\in A$, for some integer $t$. Now $t\geq 0$ else we can replace $sh$ by $a_1+\dots+a_k$ contradicting the definition of $n_{a,A}$. On the other hand, $tb<sh+tb=a_1+\dots+a_k\leq k(b-1)$ and so  $t\leq k$.  Therefore
\begin{align*}
n_{a,A}+kb&= (N_{a,A}-s)h +(a_1+\dots+a_k) +(k-t)b \in (N_{a,A}-s+2k-t)A\\
& \subset  (N_{a,A}-S+2k-t)A = (b-1-t)A\subset  N_0A.
\end{align*}
We have filled in the range \eqref{eq: missingrange} for all $j\leq \tfrac{b-1}2$, which gives the whole of \eqref{eq: missingrange} if
$N_{b-a,b-A}\leq [\tfrac b2]$. Therefore we may now assume that $N_{b-a,b-A}> [\tfrac b2]$.

Since $N_{b-a,b-A}> [\tfrac b2]$ we may now rerun the argument above and obtain that 
\[
n_{b-a,b-A}+kb\in N_0(b-A)  \text{ for all } k\leq \frac{b-1}2,
\]
and therefore if $n_{a,A}+jb\not\in \mathcal E(b-A)$ then
\[
n_{a,A}+jb\in N_0A  \text{ for all } j\geq  \frac{b-1}2 ,
\]
since 
\[
N_0-\frac{b-1}2-\frac {n_{a,A} +n_{b-a,b-A}}b \leq b-\frac{b-1}2-1=\frac{b-1}2. \qedhere
\]
 \end{proof}

 \section{Higher dimensional postage stamp problem}\label{high}
 
 Let $A = \{a_1 , \ldots , a_k\} \subset \mathbb{Z}^n$ be a finite set of vectors with $k \geq n+2$. After translating $A$, we assume that $0\in A$ so that 
 $$0\in A \subset 2A \subset \cdots.$$
We are interested in what elements are in $NA$.
  Assume that
 \[
 \Lambda_A := \langle A \rangle_{\mathbb{Z}} =  \mathbb{Z}^n.
 \]

 It is evident from the definitions that 
\[
NA \subset NH(A)\cap \mathcal P (A) = (NH(A)\cap \mathbb Z^n) \setminus \mathcal E (A) 
\]
Let $b\in A$ and suppose that $x\in NA$ so that $x=\sum_{a \in A} c_a a$ where the $c_a$ are non-negative integers that sum to $N$. Therefore 
$Nb-x=Nb-\sum_{a \in A} c_a a=\sum_{a \in A} c_a(b-a)\in N(b-A)\subset  \mathcal P (b-A)$.
This implies that $Nb-x\not\in   \mathcal E (b-A)$, and so $x\not\in Nb- \mathcal E (b-A)$.
Therefore 
\[
NA \subset (NH(A)\cap \mathbb Z^n) \setminus \mathcal E_N(A)
\]
where
\[
\mathcal E_N(A) :=NH(A)\cap\left( \mathcal E(A)\cup \bigcup_{a\in A} (aN-\mathcal E(a-A) ) \right).
\]
In Theorem \ref{Thm2} we will show that this  is an equality for large $N$. We use  two classical lemmas to prove this, and include their short proofs.

 \subsection{Two classical lemmas}

\begin{lemma} [Carath\'eodory's theorem] \label{Lem:Cat} 
Assume that $0\in A$ and $A-A$ spans $\mathbb R^n$.
If $v\in NH(A)$ then there exists a subset $B\subset A$ which contains $n+1$ elements, such that $B-B$ is a spanning set for $\mathbb R^n$,  
for which $v\in NH(B)$.
\end{lemma}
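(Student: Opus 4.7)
The plan is to start from a representation of $v$ with nonnegative coefficients summing to $N$ that has minimal support, apply the classical ``support-reduction'' trick to force this support to be affinely independent, and then pad it out to an affinely independent $(n+1)$-subset of $A$ using the hypothesis that $A-A$ spans $\R^n$.

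First I would fix a representation $v=\sum_{a\in A}c_a a$ with $c_a\geq 0$ and $\sum_a c_a=N$, chosen to minimise the size of the support $S:=\{a\in A:c_a>0\}$. I claim that $S$ is then affinely independent (so in particular $|S|\leq n+1$). Indeed, if $S$ were affinely dependent there would exist scalars $(\lambda_a)_{a\in S}$, not all zero, satisfying $\sum_{a\in S}\lambda_a a=0$ and $\sum_{a\in S}\lambda_a=0$; after flipping signs I may assume some $\lambda_a>0$. Setting $c_a':=c_a-t\lambda_a$ yields a valid representation of $v$ with the same total weight $N$ for every $t\in\R$, and the choice $t:=\min\{c_a/\lambda_a:\lambda_a>0\}$ keeps every $c_a'\geq 0$ while zeroing out at least one coordinate, contradicting the minimality of $|S|$.

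Next, having $S$ affinely independent with $|S|\leq n+1$, I would extend it to $B\subseteq A$ with $|B|=n+1$ while preserving affine independence. So long as $|S|<n+1$, the affine span $V$ of $S$ is a proper affine subspace of $\R^n$. The hypothesis that $A-A$ spans $\R^n$ prevents $A$ from being contained in $V$ (otherwise $A-A$ would lie inside the proper linear subspace $V-V$), so I can pick $a\in A\setminus V$ and adjoin it to $S$; this enlarges both $|S|$ and $\dim V$ by one. Iterating yields the desired $B$, and affine independence of a set of size $n+1$ is equivalent to $B-B$ being a spanning set for $\R^n$. Since $v$ is already in $NH(S)$ and $S\subseteq B$, assigning coefficient $0$ to the newly adjoined elements exhibits $v\in NH(B)$.

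The only step that does real work is the support-reduction argument; the subsequent enlargement is forced by the spanning hypothesis, which is precisely what stops the affine hull of $S$ from stabilising before filling $\R^n$. I therefore do not anticipate any serious obstacle, and the whole proof should fit in a short paragraph once formalised.
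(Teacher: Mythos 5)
Your proof is correct and follows essentially the same route as the paper: minimise the support of a representation of $v$, use the perturbation along an affine dependence to contradict minimality, and then pad the resulting affinely independent support up to $n+1$ elements. The only difference is cosmetic — you phrase the key step as affine independence of the support where the paper phrases it as linear independence of $b-b_0$ for $b$ in the support — and your justification of the padding step (that $A-A$ spanning $\mathbb{R}^n$ prevents $A$ from lying in the proper affine hull of the support) is slightly more explicit than the paper's.
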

 
Note that the condition $B-B$ spans $\mathbb{R}^n$ is equivalent to the condition that $B$ is not contained in any hyperplane. In two dimensions, Lemma~\ref{Lem:Cat} asserts that each point of a polygon lies in a triangle (which depends on that point) formed by 3 of the vertices. 

\begin{proof}
Since  $v \in NH(A)$ we can write
$$v=\sum_{a\in A} c_aa\in NH(A), \text{ with } 0 \leq \sum_{a \in A} c_a \leq N,$$
where each $c_a \geq 0$. We select the representation that  minimizes $\# B$ where 
$$B=\{ a: c_a>0\},$$
 Select any $b_0\in B$. We now show that the vectors $b-b_0,\ b\in B, b\ne b_0$ are linearly independent over $\mathbb R$.
If not we can write
$$\sum_{b\in B\setminus \{ b_0\} } e_b(b-b_0)=0,$$
 where the $e_b$ are not all $0$.  Let $e_{b_0}=-\sum_b e_b$ so that  $\sum_{b\in B} e_bb=0$ and $\sum_{b\in B} e_b=0$, and at least one $e_b$ is positive. Now let 
 $$m=\min_{b:\ e_b>0} c_b/e_b,$$
  where $c_\beta=me_\beta$ with $\beta\in B$. Then $v=\sum_{b\in B} (c_b-me_b)b$ where  each $c_b-me_b\geq 0$ with  $\sum_{b\in B} (c_b-me_b)=\sum_{b\in B} c_b-m\sum_{b\in B} e_b=\sum_{b\in B} c_b\in [0,N]$.
However the coefficient $c_\beta-me_\beta=0$ and this contradicts the minimality of $\#B$ .

Since the  vectors $b-b_0,\ b\in B, b\ne b_0$ are linearly independent, we can add new elements of $A$ to the set $B$ until we have $n+1$ elements, and then we obtain the result claimed.
 \end{proof}

For $u=(u_1,\dots,u_n), v=(v_1,\dots,v_n)\in \mathbb Z_{\geq 0}^n$, we write $u\leq v$ if $u_i\leq v_i$ for each $i=1,\ldots,n$. The following is a classical lemma in additive combinatorics:\footnote{Formerly known as ``additive number theory''.}
 
\begin{lemma}[Mann's lemma] \label{lem: Mann}
Let $S\subset \mathbb Z_{\geq 0}^n$. There is a finite subset $T\subset S$ such that for all $s\in S$ there exists $t\in T$ for which $t\leq s$.
\end{lemma}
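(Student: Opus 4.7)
The plan is to proceed by induction on the dimension $n$. The base case $n=1$ is immediate: since $\mathbb{Z}_{\geq 0}$ is well-ordered, one may take $T=\{\min S\}$, and then every $s\in S$ satisfies $t\leq s$ for this single $t$.

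For the inductive step, assume the lemma in dimension $n-1$ and fix a nonempty $S\subset \mathbb{Z}_{\geq 0}^n$ (the empty case being trivial). Choose any $s_0=(s_0^{(1)},\dots,s_0^{(n)})\in S$. Every $s\in S$ either satisfies $s_0\leq s$ coordinatewise, in which case $s_0$ already dominates it, or else there is some index $i\in\{1,\dots,n\}$ with $s^{(i)}<s_0^{(i)}$. For each pair $(i,v)$ with $1\leq i\leq n$ and $0\leq v<s_0^{(i)}$, define the slice
\[
S_{i,v}:=\{s\in S:\ s^{(i)}=v\},
\]
and identify it with its projection to $\mathbb{Z}_{\geq 0}^{n-1}$ by deleting the $i$-th coordinate. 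By the inductive hypothesis there is a finite set $T_{i,v}\subset S_{i,v}$ such that every element of $S_{i,v}$ is dominated by some element of $T_{i,v}$ in the remaining $n-1$ coordinates; since both sides share the value $v$ in the deleted coordinate, this inequality lifts back to $\mathbb{Z}_{\geq 0}^n$. Setting
\[
T:=\{s_0\}\cup \bigcup_{i=1}^{n}\ \bigcup_{v=0}^{s_0^{(i)}-1} T_{i,v},
\]
a finite union of finite subsets of $S$, we obtain the required $T$.

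There is essentially no serious obstacle here: this is the standard proof of Dickson's lemma, and the only thing to be careful about is that ``fixing one coordinate to a specific value'' genuinely reduces the problem to dimension $n-1$ and that the resulting covers are elements of $S$ itself. An equivalent phrasing, should one prefer, is to observe that $(\mathbb{Z}_{\geq 0}^n,\leq)$ is a well-quasi-order (all antichains are finite, and the order is well-founded), and to take $T$ to be the set of coordinatewise-minimal elements of $S$: it is finite since it is an antichain, and it dominates $S$ by well-foundedness.
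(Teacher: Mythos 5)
Your proof is correct and follows essentially the same route as the paper's: induction on $n$, with the base case given by the minimum of $S$, and the inductive step obtained by fixing an arbitrary $s_0\in S$, slicing $S$ along each coordinate value below the corresponding coordinate of $s_0$, and applying the inductive hypothesis to each slice. The closing remark about $(\mathbb Z_{\geq 0}^n,\leq)$ being a well-quasi-order is a nice alternative viewpoint but is not needed.
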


\begin{proof} We prove by induction on $n\geq 1$. For convenience we will write $T\leq S$, if for all $s\in S$ there exists $t\in T$ for which $t\leq s$. For $n=1$ let $T=\{ t\}$ where $t$ is the smallest integer in $S$. For $n>1$, select 
any element $(s_1,\dots, s_n)\in S$.  Define $S_{j,r}:=\{ (u_1,\dots,u_n)\in S:\ u_j=r\}$ for each $j=1,\cdots,n$ and $0\leq r<s_j$. Let $\phi_j((u_1,\dots,u_n))=(u_1,\cdots,u_{j-1},u_{j+1},\cdots,u_n)$. The set 
$\phi_j(S_{j,r}) \subset \mathbb Z_{\geq 0}^{n-1}$ and so, by the induction hypothesis, there exists a finite subset $T_{j,r}\subset S_{j,r}$ such that $\phi_j(T_{j,r})\leq \phi_j(S_{j,r})$, which implies that $T_{j,r} \leq  S_{j,r}$ as their $j$th co-ordinates are the same. Now let
\[
 T = \{ (s_1,\dots, s_n)\}   \bigcup_{j=1}^n \bigcup_{r=0}^{s_j-1} T_{j,r},
 \]
 which is a finite union of finite sets, and so finite.
If $s\in S$ then either $(s_1,\dots, s_n)\leq s$, or $s\in S_{j,r}$ for some $j, 1\leq j\leq n$, and some
$r, 0\leq r<s_j$. Hence $T\leq S$.
\end{proof}

 \begin{lemma}[Mann's lemma, revisited] \label{lem: Mann2}
Let $S\subset \mathbb Z_{\geq 0}^n$ with the property that if $s\in S$ then $s+\mathbb Z_{\geq 0}^n\in S$.
Then $E:=\mathbb Z_{\geq 0}^n\setminus S$ is a finite union of sets of the form: For some $I\subset \{ 1,\dots, n\}$
\[
\{ (x_1,\dots,x_n): x_i\in \mathbb Z_{\geq 0} \text{ for each } i\in I\} \text{ with } x_j \text{ fixed if } j\not\in I. 
\]
\end{lemma}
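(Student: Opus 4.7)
The plan is to first invoke Lemma \ref{lem: Mann} applied to $S$ itself to produce a finite ``generating'' subset $T = \{t^{(1)}, \ldots, t^{(m)}\} \subset S$ with the property that every $s \in S$ satisfies $t^{(j)} \le s$ (coordinatewise) for some $j$. Combined with the upward-closure hypothesis (namely, $s + \mathbb{Z}_{\geq 0}^n \subset S$ whenever $s \in S$), this yields
\[
S \ = \ \bigcup_{j=1}^m \bigl( t^{(j)} + \mathbb{Z}_{\geq 0}^n \bigr),
\]
and consequently
\[
E \ = \ \{\, x \in \mathbb{Z}_{\geq 0}^n : \text{for every } j \in \{1,\ldots,m\}, \text{ there is some } i \in \{1,\ldots,n\} \text{ with } x_i < t_i^{(j)} \,\}.
\]

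Next I would convert this ``$\forall j\,\exists i$'' statement into a finite union of ``$\forall j$'' statements by choosing, for each $j$, a coordinate that witnesses the failure of $x \ge t^{(j)}$. Concretely, for every function $\phi : \{1,\ldots,m\} \to \{1,\ldots,n\}$---there are only $n^m$ of them---define
\[
E_\phi \ := \ \{\, x \in \mathbb{Z}_{\geq 0}^n : x_{\phi(j)} < t^{(j)}_{\phi(j)} \text{ for every } j = 1,\ldots,m \,\},
\]
so that $E = \bigcup_\phi E_\phi$. Inside $E_\phi$ the coordinates indexed by $J_\phi := \phi(\{1,\ldots,m\})$ are constrained by explicit upper bounds (take the minimum of the bounds coming from the various $j$ mapping to that index), while the coordinates indexed by $I_\phi := \{1,\ldots,n\} \setminus J_\phi$ are entirely unconstrained in $\mathbb{Z}_{\geq 0}$.

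To finish, I would split each $E_\phi$ as the (finite) union over all admissible tuples $(x_i)_{i \in J_\phi}$ of the subsets obtained by fixing those coordinates at the chosen values and letting the $I_\phi$-coordinates range freely over $\mathbb{Z}_{\geq 0}$; this is precisely the form of set required by the statement, with $I = I_\phi$. There is no substantive obstacle here---the real work was absorbed into Lemma \ref{lem: Mann}, and the remaining argument is simply the choice-function bookkeeping that converts a statement about minimal generators of the upward-closed set $S$ into a description of the ``staircase'' complement $E$.
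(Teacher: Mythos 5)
Your proof is correct, but it takes a genuinely different route from the paper's. The paper proves Lemma \ref{lem: Mann2} by a fresh induction on $n$, structurally parallel to its proof of Lemma \ref{lem: Mann}: if $S$ is nonempty one picks a single element $(s_1,\dots,s_n)\in S$, observes that any $x\in E$ must have $x_k<s_k$ for some $k$, and applies the inductive hypothesis to the $(n-1)$-dimensional slices $S_{x_k}=\{u\in S:u_k=x_k\}$. You instead invoke Lemma \ref{lem: Mann} as a black box to write $S=\bigcup_j\bigl(t^{(j)}+\mathbb Z_{\geq 0}^n\bigr)$ (the upward-closure hypothesis being exactly what upgrades Mann's domination statement to this equality), and then convert the resulting ``for every $j$ there exists a bad coordinate'' description of $E$ into a finite union by ranging over the $n^m$ choice functions $\phi$. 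I checked the two inclusions for $E=\bigcup_\phi E_\phi$ and the final splitting of each $E_\phi$ over the finitely many admissible values of the coordinates in $J_\phi$; everything goes through, including the degenerate cases ($S=\emptyset$ gives $m=0$ and the single empty choice function recovers $E=\mathbb Z_{\geq 0}^n$, and an empty range of admissible values just makes $E_\phi$ empty). What your approach buys is a completely explicit decomposition of $E$ in terms of the minimal generators of $S$, with a concrete bound on the number of pieces; what the paper's approach buys is the avoidance of the choice-function bookkeeping and a proof that mirrors the one for Lemma \ref{lem: Mann}, at the cost of redoing an induction whose content you correctly observe is already absorbed into that lemma.
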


\begin{proof} By induction on $n\geq 1$. In 1-dimension, $S$ is either empty so that $E=\mathbb Z_{\geq 0}$, or $S$ has some minimum element $s$, in which case $E$ is the finite set of elements $0,1,\dots,s-1$.

If $n>1$ then in $n$-dimensions either $S$ is empty so that $E=\mathbb Z_{\geq 0}^n$ or $S$ contains some element $(s_1,\dots,s_n)$.  Therefore if $(x_1,\dots,x_n)\in E$ there must exist some $k$ with $x_k\in \{ 0,1,\dots,s_k-1\}$.
For each such $k,x_k$ we apply the result to $S_{x_k}:=\{ (u_1,\dots,u_n)\in S:\ u_k=x_k\}$, which is $n-1$ dimensional.
\end{proof}

\subsection{The proof of Theorem \ref{Thm2} }
For any $v\in \mathcal P(A)$ define 
\[
\mu_A(v) := \min \left\{ \sum_{a\in A} n_a : v= \sum_{a\in A} n_aa, \text{ each } n_a \in \mathbb N \right\} ,
\]
and $\mu_A(V):=\max_{v\in V} \mu_A(v)$ for any $V \subset \mathcal P(A)$. By definition, 
$V\subset NA$ if and only if $N\geq \mu_A(V)$.

The heart of the proof of Theorem \ref{Thm2} is contained in the following result.

\begin{prop} \label{Prop3} 
Let $0\in B\subset A \subset \mathbb{Z}^n$ where $\Lambda_A=\mathbb Z^n$, and  $B^*=B\setminus \{ 0\}$ contains exactly $n$ elements, which span $\mathbb R^n$ (as a vector space over $\mathbb R$).  There exists a finite subset $A^+\subset \mathcal P(A) $ such that if $v\in \mathcal P(A) $ then there is some $w=w(v)\in A^+$ for which
$v-w\in  \mathcal P(B) $. (That is, $\mathcal P(A)=A^++\mathcal P(B)$.)
Let $N_{A,B}=\mu_A(A^+)$ so that $A^+\subset N_{A,B}A$.
If $N\geq N_{A,B}$ and 
$v\in (N-N_{A,B})H(B)\cap \mathbb Z^n$ but $v\not\in  \mathcal E(A) $ then $v\in NA$.
\end{prop}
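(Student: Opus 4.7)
The plan is to prove the three assertions in succession: existence of a finite $A^+\subset\mathcal P(A)$ with $\mathcal P(A)=A^++\mathcal P(B)$, the automatic containment $A^+\subset N_{A,B}A$, and finally the structural claim about $NA$.

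For the existence of $A^+$ I would use Mann's lemma. Introduce coordinates via the $\mathbb R$-basis $B^*=\{b_1,\dots,b_n\}$, so that $\mathcal P(B)$ becomes $\mathbb N^n$. Since $\Lambda_B:=\langle B^*\rangle_{\mathbb Z}$ is of full rank in $\Lambda_A=\mathbb Z^n$, the quotient $Q:=\mathbb Z^n/\Lambda_B$ is finite. For each $c\in Q$ put $V_c:=c\cap\mathcal P(A)$; in $B^*$-coordinates $V_c$ lies in a single coset of $\mathbb Z^n$, and in the region $C_B$ where the proposition is to be applied its $B^*$-coordinates are non-negative, so $V_c$ embeds, after a uniform translation, into $\mathbb N^n$. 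Mann's lemma (Lemma~\ref{lem: Mann}) then supplies a finite subset $T_c\subset V_c$ such that for every $v\in V_c$ some $t\in T_c$ has $t\le v$ componentwise, equivalently $v-t\in\mathbb N^n=\mathcal P(B)$. Setting $A^+:=\bigcup_{c\in Q}T_c$, a finite union of finite sets, yields the required set. The containment $A^+\subset N_{A,B}A$ is then immediate: each $w\in A^+$ admits a representation with at most $N_{A,B}$ summands from $A$, and padding with copies of $0\in A$ realises $w\in N_{A,B}A$.

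For the final claim, fix $N\ge N_{A,B}$ and $v\in(N-N_{A,B})H(B)\cap\mathbb Z^n$ with $v\notin\mathcal E(A)$, so that $v\in\mathcal P(A)\cap C_B$. The first part supplies $w\in A^+$ and $u\in\mathcal P(B)$ with $v=w+u$, giving $w\in N_{A,B}A$. I would verify $u\in(N-N_{A,B})B$ via $B^*$-coordinates: the hypothesis $v\in(N-N_{A,B})H(B)$ reads $v_i\ge 0$ with $\sum_i v_i\le N-N_{A,B}$, while $w\in C_B$ forces $\sum_i w_i\ge 0$, so $\sum_i u_i\le N-N_{A,B}$. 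Writing $u=\sum_i u_i b_i$ and padding with $(N-N_{A,B})-\sum_i u_i$ copies of $0\in B$ realises $u\in(N-N_{A,B})B$. Hence $v=w+u\in N_{A,B}A+(N-N_{A,B})B\subset NA$, since $B\subset A$.

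The main obstacle is the first assertion. Mann's lemma requires its target set to lie in (a translate of) $\mathbb N^n$ in $B^*$-coordinates, and justifying this inside each coset $V_c$ is where care is needed: one must work in the region $C_B$ (the same region used in the final claim) to guarantee non-negativity of $B^*$-coordinates, and handle the possibility that $B^*$-coordinates of $\mathbb Z^n$-points are merely rational (lying in $[\Lambda_A:\Lambda_B]^{-1}\mathbb Z$) rather than integral.
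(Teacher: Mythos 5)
Your proposal is correct and follows essentially the same route as the paper: partition $\mathcal P(A)$ into the cosets of $\Lambda_B$, apply Mann's lemma within each coset to extract a finite set of $\leq$-minimal elements, take $A^+$ to be the finite union of these, and deduce the final claim by comparing the $B^*$-coordinates of $v$, $w$ and $v-w$ exactly as you do. The non-negativity subtlety you flag at the end is genuine, but it is the very point the paper itself elides when it asserts $S_\ell\subset \mathbb N^n$; both arguments in effect restrict to $\mathcal P(A)\cap C_B$, which is all that is needed for the application in Theorem \ref{Thm2}.
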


 \begin{proof}  The  fundamental domain for the lattice $\Lambda_B:=\langle B \rangle_{\mathbb{Z}}$ is 
\[
\mathbb{R}^n / \Lambda_B \cong \mathcal F(B):=\left\{ \sum_{b\in B^*} c_bb: \text{ Each } c_b\in [0,1)\right\} .
\] 
 Since $\mathcal F(B)$ is bounded, we see that 
$$L:= \mathcal F(B)\cap  \mathbb{Z}^n$$ is finite. 
The sets $\ell + \Lambda_B$ partition $\mathbb{Z}^n$ as $\ell$ varies over $\ell\in L$.
For each $\ell\in L$ we define 
\[  A_\ell = (\ell + \Lambda_B) \cap \mathcal P(A) , \]
which partition $\mathcal P(A)$ into disjoint sets, 
so that $\mathcal P(A) =\bigcup _{\ell \in L} A_\ell$. Define $S_\ell\subset  \mathbb N^n$ by
\[
A_\ell :=\left\{ \ell+\sum_{b \in B^*} c_bb: (c_1,\ldots,c_n)\in S_\ell\right\} \subset C_B.
\]
By Mann's lemma (Lemma \ref{lem: Mann}), there is a finite subset $T_\ell\subset S_\ell$ such that for each $s \in S_\ell$ there is a $t \in T_\ell$ satisfying $t \leq s$. We may assume that $T_\ell$ is minimal, and define 
$$
A_\ell^+ =\left\{ \ell+\sum_{b\in B^*} c_bb: (c_1,\ldots,c_n)\in T_\ell\right\} \subset A_\ell.
$$
By definition, for any $v\in A_\ell$ there exists $w\in A_\ell^+$ such that $v-w\in \mathcal P(B) $ (for we write $v=\ell +s\cdot B$ and let $w=\ell+t\cdot B$ where $t\leq s$, as above). That is, $A_\ell=A_\ell^++\mathcal P(B)$.

Let $A^+=\cup_{\ell \in L} A_\ell^+$ which is a finite union of finite sets, and so is finite, and
$A^+\subset \mathcal P(A) $.  Moreover $\mathcal P(A) =\bigcup _{\ell \in L} A_\ell=\bigcup _{\ell \in L} A_\ell^++\mathcal P(B) =A^++\mathcal P(B)$ as claimed.

Now suppose that  $v\in (N-N_{A,B})H(B)\subset C_B\subset C_A$. Since the vectors in $B$ are linearly independent there is a unique representation  $v=\sum_b v_bb$ as a linear combination of the elements of $B$, and has each $v_b\geq 0$ with $\sum_b v_b\leq N-N_{A,B}$.

Also suppose $v\in \mathbb Z^n$ but $v\not\in  \mathcal E(A) $ so that $v\in   \mathcal P(A)$, as $v\in C_A\cap \mathbb Z^n$. Therefore there exists a unique $\ell\in L$ for which $v\in A_\ell$, and  $w=w(v)=\sum_b w_bb\in A_\ell^+$  for which each $0\leq w_b\leq v_b$.  Therefore $v-w=\sum_b (v_b-w_b)b\in UB$ where $U:=\sum_b (v_b-w_b)\leq \sum_b v_b\leq N-N_{A,B}$ and so $v-w\in (N-N_{A,B})B$.
By definition,  $w \in N_{A,B}A$, and so 
\[
v=(v-w)+w \in (N-N_{A,B})B+N_{A,B}A  \subset (N-N_{A,B})A +N_{A,B}A =NA.\qedhere
\]
 \end{proof}

\begin{proof}[Proof of Theorem \ref{Thm2}] 
For every subset $B\subset A$ which contains $n+1$ elements, such that $B-B$ is a spanning set for $\mathbb R^n$, define $N_{A,B}^*:=N_{A,B}+\sum_{b\in B,\ b\ne 0} N_{b-A,b-B}$, and let
$N_A$ be the maximum of these  $N_{A,B}^*$.  If $N\geq N_{A}$ and
$v\in NH(A)$ then $v\in NH(B)$ for some such set $B$, by Lemma \ref{Lem:Cat}.
If we also have $v\in  \mathbb Z^n$ but 
 \[
 v\not\in  \mathcal E(A)\cup \bigcup_{b\in B, b\ne 0}  (Nb -  \mathcal E(b-A))
 \]
then we can write
 $v=\sum_{b\in B} c_b b$ for real $c_b\geq 0$ with 
 \[
 \sum_{b\in B} c_b=N\geq N_{A,B}+\sum_{b\in B,\ b\ne 0} N_{b-A,b-B}.
 \] 
 Therefore
 
 $\bullet$\ Either  $c_0\geq N_{A,B}$ in which case
 \[
v = \sum_{b\in B, b\ne 0} c_b b \in (N-c_0)H(B) \subset (N-N_{A,B}) H(B)
 \]
 as well as $v\in \mathbb Z^n \setminus \mathcal E(A)=\mathcal P(A)$, and so $v\in NA$ by Proposition \ref{Prop3};
 
 $\bullet$\  Or there exists $\beta\in B,\ \beta\ne 0$ for which $c_\beta\geq N_{\beta-A,\beta-B}$ so that
 \[
 \beta N-v = \sum_{b\in B} c_b (\beta-b) \in (N-c_{\beta}) H(\beta-B)\subset (N-N_{\beta-A,\beta-B}) H(\beta-B).
 \]
 Now $v,\beta  \in \mathbb Z^n$ and so $\beta N-v\in \mathbb Z^n$.
 Also $v\not\in \beta N -  \mathcal E(\beta-A)$ by hypothesis, and so 
 $\beta N -v\not\in   \mathcal E(\beta-A)$. Therefore  $\beta N -v\in N(\beta-A)$ by Proposition \ref{Prop3}, giving that $v\in NA$.
 \end{proof}
 
 \section{The structure and size of the exceptional set} \label{sec:SizeSet}
 
 \begin{prop} \label{Prop5} 
Let $0\in B\subset A \subset \mathbb{Z}^n$ where $\Lambda_A=\mathbb Z^n$, and  $B^*=B\setminus \{ 0\}$ contains exactly $n$ elements, which span $\mathbb R^n$, so that $C_B=\{ \sum_{b\in B^*} x_bb:\text{ Each } x_b\geq 0\}$.
There exist $r_b\geq 0$ such that $\{ \sum_{b\in B^*} x_bb:\text{ Each } x_b\geq r_b\}\cap \mathbb{Z}^n \subset \mathcal P(A)$.
\end{prop}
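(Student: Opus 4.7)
The plan is to exhibit, for each residue class of $\mathbb Z^n$ modulo $\Lambda_B:=\langle B\rangle_{\mathbb Z}$, one fixed representative lying in $\mathcal P(A)$, and then to let the $r_b$ be determined by the $B^*$-coordinates of these finitely many representatives. Since $B^*$ consists of $n$ linearly independent vectors it is a $\mathbb Z$-basis of $\Lambda_B$, so every $v\in\mathbb R^n$ has a unique expansion $v=\sum_{b\in B^*}x_b b$ with $x_b\in\mathbb R$, lying in $\Lambda_B$ precisely when each $x_b\in\mathbb Z$. The set $L:=\mathcal F(B)\cap\mathbb Z^n$ from the proof of Proposition \ref{Prop3} is a finite complete set of coset representatives for the finite abelian group $\mathbb Z^n/\Lambda_B$.

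The crucial step is to show that every coset is hit by $\mathcal P(A)$: for each $\ell\in L$ one can find $w_\ell\in\mathcal P(A)$ with $w_\ell\equiv\ell\pmod{\Lambda_B}$. Here one uses the hypothesis $\Lambda_A=\mathbb Z^n$, which forces the images $\bar a\in\mathbb Z^n/\Lambda_B$ of elements $a\in A$ to generate the quotient; since this quotient is a finite abelian group, any element is a \emph{non-negative} integer combination of any generating set (add a multiple of the group order to any negative coefficient). One may thus choose integers $n_{a,\ell}\ge 0$ with $\sum_{a\in A}n_{a,\ell}\,a\equiv\ell\pmod{\Lambda_B}$ and set $w_\ell:=\sum_{a\in A}n_{a,\ell}\,a\in\mathcal P(A)$.

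Now expand each representative as $w_\ell=\sum_{b\in B^*}y_{b,\ell}\,b$ in the $B^*$-basis, with $y_{b,\ell}\in\mathbb Q$ (possibly negative, since $w_\ell$ need not lie in $C_B$), and define
\[
r_b := \max\Bigl\{\,0,\ \max_{\ell\in L}y_{b,\ell}\,\Bigr\}\ \ge\ 0.
\]
Given any $v=\sum_{b\in B^*}x_b b\in\mathbb Z^n$ with $x_b\ge r_b$ for all $b\in B^*$, let $\ell\in L$ represent the coset of $v$, and set $m_b:=x_b-y_{b,\ell}$. Since $v,w_\ell\in\ell+\Lambda_B$, the difference $v-w_\ell=\sum_{b\in B^*}m_b b$ lies in $\Lambda_B$, so uniqueness of expansion in the $B^*$-basis forces each $m_b\in\mathbb Z$. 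The inequality $x_b\ge r_b\ge y_{b,\ell}$ then gives $m_b\ge 0$, so $v-w_\ell\in\mathcal P(B)\subset\mathcal P(A)$, whence $v=w_\ell+(v-w_\ell)\in\mathcal P(A)$, as required.

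The only point with any real content is the existence of the representatives $w_\ell$; the rest is bookkeeping in the $B^*$-basis. One could alternatively read the $w_\ell$ off from Proposition \ref{Prop3} by picking one element from each $A_\ell^+$, but that reduces to the same quotient-group argument since one still has to verify every $A_\ell^+$ is non-empty.
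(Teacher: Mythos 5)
Your proof is correct and follows essentially the same route as the paper: both arguments produce, for each coset of $\Lambda_B$ in $\mathbb{Z}^n$, a representative lying in $\mathcal P(A)$ (using $\Lambda_A=\mathbb{Z}^n$ to write the representative as an integer combination of elements of $A$ and then forcing the coefficients to be non-negative), and then absorb $\mathcal P(B)$ to cover everything deep enough in the cone. The only cosmetic difference is that the paper non-negativizes all coefficients simultaneously by translating $L$ by a single deep point $cM\gamma$ of $C_B$, whereas you adjust each coset representative separately via multiples of the order of $\mathbb{Z}^n/\Lambda_B$ and then take $r_b$ to be a maximum over the finitely many representatives.
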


We deduce that if $x:=\sum_{b\in B^*} x_bb\in (C_B\cap \mathbb{Z}^n)\cap \mathcal E(A)$ then  $0\leq x_b<r_b$ for some $b$.
In other words $x$ is at a bounded distance from the boundary generated by $B\setminus \{ b\}$.
(Theorem 2 of  \cite{ST} gives a related result but is difficult to interpret  in the language used here.)

 \begin{proof} We will use the notation of Proposition \ref{Prop3}.
  The elements of $B^*$ are linearly independent so that $\beta:=\sum_{b\in B} b$ lies in the interior of 
 $C_B$. Therefore if the integer $M$ is sufficiently large then $\gamma:=\beta +\tfrac 1M \sum_{a\in A} a$ also lies in the interior of 
 $C_B$.
 
Now as $A$ generates $\mathbb Z^n$ as a vector space over $\mathbb Z$, we know that for each $\ell\in L$ there exist integers
$c_{\ell, a}$ such that $\ell= \sum_{a\in A} c_{\ell, a}a$. Let $c\geq 0$ be an integer $\geq \max_{\ell\in L, a\in A} (-c_{\ell, a})$. The set $L'=cM \gamma + L$ of $\mathbb Z^n$-points  is a translate of $L$ that can be represented as 
\[
cM \gamma +  \sum_{a\in A} c_{\ell, a}a = cM \beta  + \sum_{a\in A} (c+c_{\ell, a})a \in  \mathcal P(A) \text{ for each } \ell \in L.
\]
The translation is by $cM \gamma\in C_B$ so $L'=cM \gamma + L\subset C_B$; moreover $L'$ gives a complete set of representatives
of $\mathbb{R}^n / \Lambda_B$ and so every lattice point in $L'+\mathcal P(B)$ belongs to $\mathcal P(A)$.
We can re-phrase this as
\[
(cM \gamma+ C_B) \cap \mathbb{Z}^n \subset \mathcal P(A).
\]
Therefore if $cM \gamma = \sum_{b\in B^*} r_bb$ and $x:=\sum_{b\in B^*} x_bb\in \mathbb{Z}^n$, then 
$x\in \mathcal P(A)$ if each $x_b\geq r_b$.   
\end{proof}

\begin{proof}[Proof of Theorem \ref{Thm3}] 
We again use Lemma~\ref{Lem:Cat} to focus on sets $B\subset A$ which contain $n+1$ elements, such that $B-B$ is a spanning set for $\mathbb R^n$. We translate $B$ so that $0\in B$. As in the proof of Proposition \ref{Prop3}, we fix $\ell \in L$ (which is a finite set). Proposition \ref{Prop5} shows that $S_\ell$ is non-empty.
Lemma \ref{lem: Mann2} yields  the structure of   $ \mathbb Z_{\geq 0}^n\setminus S_\ell$, which is not all of $ \mathbb Z_{\geq 0}^n$ as $S_\ell$ contains an element. This implies that the structure of
$(\ell + \Lambda_B) \cap \mathcal E(A)$ is as claimed in Theorem \ref{Thm3}. The result follows as $\mathcal E(A)$ is a finite union of such sets.
\end{proof}



\end{document}